\newtheorem{theorem}{Theorem}[section]
\newtheorem{lemma}[theorem]{Lemma}
\theoremstyle{definition}
\newtheorem{defn}[theorem]{Definition}
\newtheorem{example}[theorem]{Example}
\newtheorem{remark}[theorem]{Remark}
\numberwithin{equation}{theorem}
\def\CC{\mathbb{C}}
\def\DD{\mathbb{D}}
\def\Fp{\mathbb{F}_p}
\def\Qp{\mathbb{Q}_p}
\def\QQ{\mathbb{Q}}
\def\RR{\mathbb{R}}
\def\Zp{\mathbb{Z}_p}
\def\ZZ{\mathbb{Z}}
\def\calE{\mathcal{E}}
\def\calF{\mathcal{F}}
\def\calH{\mathcal{H}}
\def\calM{\mathcal{M}}
\def\calR{\mathcal{R}}
\def\calS{\mathcal{S}}
\def\gothm{\mathfrak{m}}
\def\gotho{\mathfrak{o}}
\def\dual{\vee}
\def\bB{\mathbf{B}}
\DeclareMathOperator{\adm}{adm}
\DeclareMathOperator{\alg}{alg}
\DeclareMathOperator{\an}{an}
\DeclareMathOperator{\Aut}{Aut}
\DeclareMathOperator{\bd}{bd}
\DeclareMathOperator{\Fil}{Fil}
\DeclareMathOperator{\Frac}{Frac}
\DeclareMathOperator{\Gal}{Gal}
\DeclareMathOperator{\GL}{GL}
\DeclareMathOperator{\inte}{int}
\DeclareMathOperator{\Lie}{Lie}
\DeclareMathOperator{\perf}{perf}
\DeclareMathOperator{\rank}{rank}
\DeclareMathOperator{\rig}{rig}
\DeclareMathOperator{\wa}{wa}
\title[Relative $p$-adic Hodge theory and Rapoport-Zink period domains]{Relative $p$-adic Hodge theory and Rapoport-Zink period domains}
\author[Kiran Sridhara Kedlaya]{Kiran Sridhara Kedlaya\thanks{Supported by NSF (CAREER grant DMS-0545904), DARPA (grant HR0011-09-1-0048),
MIT (NEC Fund, Cecil and Ida Green professorship), IAS (NSF grant DMS-0635607, James D. Wolfensohn Fund).}}
\begin{document}

\begin{abstract}
As an example of relative $p$-adic Hodge theory, we sketch the 
construction of the universal
admissible filtration of an isocrystal ($\phi$-module) over the completion
of the maximal unramified extension of $\Qp$, together
with the associated universal crystalline local system.
\end{abstract}

\begin{classification}
Primary 14G22; Secondary 11G25.
\end{classification}

\begin{keywords}
Relative $p$-adic Hodge theory, Rapoport-Zink period domains.
\end{keywords}

\maketitle

\section*{Introduction}

The subject of \emph{$p$-adic Hodge theory} seeks to clarify the relationship between various
cohomology theories (primarily \'etale and de Rham)
associated to algebraic varieties over $p$-adic fields, in much the same way as 
ordinary Hodge theory clarifies the relationship between various cohomology theories (primarily Betti and de Rham)
associated to
complex algebraic varieties. 
Only recently, however, has $p$-adic Hodge theory progressed to the point of dealing comfortably
with \emph{families} of $p$-adic varieties, in the way that one uses variations of Hodge structures to deal with
families of complex varieties.

In this lecture, we illustrate one example of relative $p$-adic Hodge theory: the construction of
the universal admissible filtration on an isocrystal of given Hodge-Tate weights, and the corresponding universal crystalline local system. This problem was originally introduced by Rapoport and
Zink \cite{rapoport-zink}, as part of a generalization of the construction of $p$-adic symmetric spaces
by Drinfel'd \cite{drinfeld}; the relevant spaces in this construction are the moduli of filtered
isocrystals. For $K_0$ an absolutely unramified $p$-adic field with perfect residue field, an \emph{isocrystal} is a
finite-dimensional $K_0$-vector space equipped with an invertible semilinear Frobenius action.
Typical examples are the crystalline cohomology groups of a smooth proper scheme over the residue field of $K_0$.
If the scheme lifts to characteristic 0, one then obtains a \emph{filtered isocrystal} by transferring the
Hodge filtration from de Rham cohomology to crystalline cohomology via the canonical isomorphism.
One can then pass directly from this filtered isocrystal to the
\'etale cohomology of the scheme, by a recipe of Fontaine.

Given an isocrystal, the possible filtrations on it with jumps at particular indices (i.e., with 
prescribed \emph{Hodge-Tate weights}) are naturally parametrized by a partial flag variety.
Of the points of this variety defined over finite extensions of $K_0$, one can identify those which
give rise to Galois representations: by a theorem of Colmez and Fontaine \cite{colmez-fontaine},
they are the ones satisfying a simple linear-algebraic condition called \emph{weak admissibility}
(analogous to the notion of \emph{semistability} in the theory of vector bundles).
Rapoport and Zink conjecture the existence of a rigid analytic subspace of this variety, containing
exactly the weakly admissible points, and admitting a local system specializing at each point to the appropriate
crystalline Galois representation. 
What makes this conjecture subtle is that while the definition of weak admissibility suggests a natural analytic structure
on the set of weakly admissible points, one cannot construct the local system without modifying the Grothendieck topology.
As observed by de Jong
\cite{dejong}, this situation is better understood in Berkovich's language of nonarchimedean analytic spaces:
the space sought by Rapoport-Zink has the same rigid analytic points as the weakly admissible locus, but
is missing some of the nonrigid points.

We construct the Rapoport-Zink space and its associated local system by copying as closely as possible
the corresponding construction in equal characteristic given by Hartl \cite{hartl-equi}. The definition of the
space itself, as suggested by Hartl \cite{hartl-mixed}, is similar in spirit to the definition of weak
admissibility, but it concerns not the original filtered isocrystal but an associated isocrystal over a somewhat
larger ring. In the case of a rigid analytic point, this ring is the \emph{Robba ring} appearing in the 
modern theory of $p$-adic differential equations; it is the ring of germs of power series (over a certain coefficient field)
convergent at the outer boundary of the open unit disc (or more precisely, on some unspecified open annulus with unit outer radius). The classification of isocrystals over the Robba ring, analogous to the classification of rational
Dieudonn\'e modules, was introduced by this author in \cite{kedlaya-annals}. We generalized this classification
\cite{kedlaya-revisited} in a fashion that allows it to be applied to arbitrary Berkovich-theoretic points of the flag variety. Having identified a candidate for the admissible locus, we imitate Berger's alternate
proof of the Colmez-Fontaine theorem \cite{berger-adm} to construct an isocrystal (more precisely a 
$(\phi, \Gamma)$-module) over a relative Robba ring, from which
we construct the desired local system by the usual procedure from $p$-adic Hodge theory (specializing to $p$-th power roots 
of unity and then performing Galois descent).

One of the intended applications of this construction is to the study of period morphisms associated to moduli spaces
of $p$-divisible groups (Barsotti-Tate groups).
Fix a $p$-divisible group $G$ over $\Fp^{\alg}$
of height $h$ and dimension $d$; its rational crystalline Dieudonn\'e module $\DD(G)_{K_0}$ is then
an isocrystal over $K_0 = \Frac(W(\Fp^{\alg}))$. To any complete discrete valuation ring $\gotho_K$
of characteristic $0$ with residue field $\Fp^{\alg}$, and any deformation of $G$
to a $p$-divisible group $\tilde{G}$ over $\gotho_K$, Grothendieck and Messing
\cite{messing} associate an extension
\[
0 \to (\Lie\,\tilde{G}^\dual)_K^\dual \to \DD(G)_{K} \to (\Lie\,\tilde{G})_K \to 0.
\]
This gives $\DD(G)_K$ the structure of a filtered isocrystal with Hodge-Tate weights in $\{0,1\}$,
and determines a $K$-point in the Grassmannian $\calF$ of $(h-d)$-dimensional subspaces of $\DD(G)_{K_0}$.
Grothendieck asked \cite{groth-icm} which points of $\calF$ can occur in this fashion;
Rapoport and Zink proved
\cite[5.16]{rapoport-zink} that all such points belong to the image of a certain
\emph{period morphism} from the generic fibre of the universal deformation space of $G$.
Using results of Faltings, Hartl \cite[Theorem~3.5]{hartl-mixed2} has shown that
his (and our) admissible locus is exactly the image of the Rapoport-Zink period morphism.

The presentation here is based on a lecture series given in January 2010 during the \emph{Trimestre Galoisien}
at Institut Henri Poincar\'e (Paris). The original lecture notes for that series are available online \cite{kedlaya-paris}.
We are currently preparing a more detailed manuscript in collaboration with Ruochuan Liu.

\section{Nonarchimedean analytic spaces}

It is convenient to use Berkovich's language of nonarchimedean analytic spaces. Here is the briefest of synopses of
\cite{berkovich1}.

\begin{defn}
Consider the following conditions on a ring $A$ (always assumed to be 
commutative and unital) and 
a function $\alpha: A \to [0, +\infty)$.
\begin{enumerate}
\item[(a)]
For all $g,h \in A$, we have $\alpha(g-h) \leq \max\{\alpha(g), \alpha(h)\}$.
\item[(b)]
We have $\alpha(0) = 0$.
\item[(b$'$)]
For all $g \in A$, we have $\alpha(g) = 0$ if and only if $g=0$.
\item[(c)]
We have $\alpha(1) = 1$, and for all $g,h \in A$, we have $\alpha(gh) \leq \alpha(g) \alpha(h)$.
\item[(c$'$)]
We have $\alpha(1) = 1$, and for all $g,h \in A$, we have $\alpha(gh) = \alpha(g) \alpha(h)$.
\end{enumerate}
We say $\alpha$ is a \emph{(nonarchimedean) seminorm} if it satisfies (a) and (b), and a
\emph{(nonarchimedean) norm} if it satisfies (a) and (b$'$). We say $\alpha$ is 
\emph{submultiplicative} if it satisfies (c), and \emph{multiplicative} if it satisfies (c$'$).
\end{defn}

\begin{example}
For any ring $A$, the function sending 0 to 0 and every other element of $A$ to 1 is a nonarchimedean
norm, called the \emph{trivial norm}. It is multiplicative if $A$ is an integral domain,
 and submultiplicative otherwise.
\end{example}
\begin{example}
For $R$ a ring equipped with a (sub)multiplicative (semi)norm \mbox{$|\cdot|$}, the \emph{Gauss 
(semi)norm} on $R[T]$ takes
$\sum_i a_i T^i$ to $\max_i\{|a_i|\}$. 
\end{example}

\begin{defn}
Let $A$ be a ring equipped with a submultiplicative norm $|\cdot|$. The \emph{Gel'fand spectrum}
$\calM(A)$ of $A$ is the set of multiplicative seminorms $\alpha$ on $A$ bounded above by $|\cdot|$,
topologized as a closed (hence compact, by Tikhonov's theorem) subspace of the product $\prod_{a \in A} [0, |a|]$.
A subbasis of this topology is given by the sets $\{\alpha \in \calM(A): \alpha(f) \in I\}$
for each $f \in A$ and each open interval $I \subseteq \RR$.
For $\widehat{A}$ the 
separated completion of $A$ with respect to $|\cdot|$, extension by continuity gives a 
natural identification of $\calM(A)$ with $\calM(\widehat{A})$.

For $\alpha \in \calM(A)$, the seminorm $\alpha$ induces a multiplicative norm on the integral domain $A/\alpha^{-1}(0)$,
and hence also on $\Frac(A/\alpha^{-1}(0))$. The completion of this latter field is the \emph{residue field}
of $\alpha$, denoted $\calH(\alpha)$.
\end{defn}

\begin{lemma} \label{L:surjective}
Let $A,B$ be rings equipped with submultiplicative norms $|\cdot|_A$, $|\cdot|_B$.
\begin{enumerate}
\item[(a)]
Let $\phi: A \to B$ be a ring homomorphism for which $|\phi(a)|_B \leq |a|_A$ for all $a \in A$. 
Then $\phi$ induces a continuous 
map $\phi^*: \calM(B) \to \calM(A)$ by restriction.
\item[(b)]
Suppose further that $\phi$ is injective and $A$ admits an orthogonal complement in $B$.
Then $\phi^*$ is surjective.
\end{enumerate}
\end{lemma}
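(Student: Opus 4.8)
\emph{Proof sketch.}
Part (a) is formal, and the map is forced: given $\beta \in \calM(B)$ I would set $\phi^*(\beta) = \beta \circ \phi$. This is visibly a multiplicative seminorm on $A$, it lies in $\calM(A)$ because $\beta(\phi(a)) \le |\phi(a)|_B \le |a|_A$, and it is continuous because $(\phi^*)^{-1}(\{\alpha : \alpha(f) \in I\}) = \{\beta : \beta(\phi(f)) \in I\}$ is again one of the subbasic open sets. For part (b) the plan is to base change a given $\alpha \in \calM(A)$ along its residue field and then invoke the nonemptiness of the Gel'fand spectrum of a nonzero Banach ring (\cite{berkovich1}).

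Concretely, fix $\alpha \in \calM(A)$ and let $L = \calH(\alpha)$, a complete field with multiplicative norm $\|\cdot\|_L$, equipped with the bounded homomorphism $\iota : A \to L$ for which $\|\iota(a)\|_L = \alpha(a)$. I would form the commutative ring $B \otimes_A L$ (with $B$ an $A$-algebra via $\phi$ and $L$ one via $\iota$), equip it with the tensor seminorm $\|\xi\| = \inf\{\max_j |b_j|_B \|\lambda_j\|_L : \xi = \sum_j b_j \otimes \lambda_j\}$, and let $C$ be its separated completion, a Banach ring. The decisive point, and the only place the hypothesis enters, is that $C \neq 0$ and $L \to C$ is an isometry onto its image. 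Indeed, writing $B = \phi(A) \oplus V$ for a closed $A$-submodule $V$ with $|\phi(a) + v|_B = \max\{|a|_A, |v|_B\}$ — so that $\phi$ is isometric and there is a norm-nonincreasing $A$-linear projection $p : B \to \phi(A)$ — I can tensor this splitting over $A$ with $L$ to get an $L$-module decomposition $B \otimes_A L \cong L \oplus (V \otimes_A L)$ in which $p \otimes \id_L$ is a norm-nonincreasing projection onto the first summand; identifying $\phi(A) \otimes_A L$ with $L$, it carries $1_B \otimes \lambda$ to $\lambda$, so $\|1_B \otimes \lambda\| \ge \|\lambda\|_L$, while trivially $\|1_B \otimes \lambda\| \le |1_B|_B \|\lambda\|_L = \|\lambda\|_L$. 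Hence $L$ sits isometrically in $C$ and $C \neq 0$, so I may pick $\gamma \in \calM(C)$. Since $L$ is a field embedded isometrically, $\gamma(\lambda) \le \|\lambda\|_L$ and $\gamma(\lambda^{-1}) \le \|\lambda\|_L^{-1}$ for $\lambda \in L^\times$, and $\gamma(\lambda)\gamma(\lambda^{-1}) = 1$, which forces $\gamma|_L = \|\cdot\|_L$. Finally I take $\beta$ to be the restriction of $\gamma$ along $B \to B \otimes_A L \to C$: it is a multiplicative seminorm on $B$ with $\beta(b) = \gamma(b \otimes 1) \le |b|_B$, hence $\beta \in \calM(B)$, and $\beta(\phi(a)) = \gamma(\phi(a) \otimes 1) = \gamma(1_B \otimes \iota(a)) = \|\iota(a)\|_L = \alpha(a)$, i.e.\ $\phi^*(\beta) = \alpha$.

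I expect the main obstacle to be exactly the nonvanishing-with-isometry claim — that the orthogonal splitting of $B$ over $A$ is inherited by $B \otimes_A L$, so that $\calH(\alpha)$ survives the base change. Everything after that (submultiplicativity of the tensor seminorm, the automatic equality $\gamma|_L = \|\cdot\|_L$, and the boundedness of $\beta$) is routine, and the sole external ingredient is Berkovich's theorem that the Gel'fand spectrum of a nonzero Banach ring is nonempty.
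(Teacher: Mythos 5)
Your proposal is correct and follows essentially the same route as the paper: base change to the completed tensor product $\calH(\alpha)\widehat{\otimes}_A B$, use the orthogonal complement hypothesis to see that $\calH(\alpha)$ embeds isometrically (so the Banach ring is nonzero), invoke Berkovich's nonemptiness of the Gel'fand spectrum, and restrict to $B$. You merely make explicit two steps the paper leaves to the reader — that the orthogonal splitting survives the tensor product via the norm-nonincreasing projection, and that any bounded multiplicative seminorm on the complete valued field $\calH(\alpha)$ must coincide with its norm.
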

\begin{proof}
Part (a) is clear. For (b), note that for any $\alpha \in \calM(A)$,
$\calH(\alpha)$ admits an orthogonal complement in the completed tensor product
$\calH(\alpha) \widehat{\otimes}_A B$. We may thus apply
\cite[Theorem~1.2.1]{berkovich1} to produce an element $\beta \in
\calM(\calH(\alpha) \widehat{\otimes}_A B)$, whose restriction to $B$ will lie in
the fibre of $\phi^*$ above $\alpha$.
\end{proof}

\begin{example}
Consider the ring $\ZZ$ equipped with the trivial norm. In this case, one may describe
$\calM(\ZZ)$ as the comb
\[
\bigcup_{\mbox{$p$ prime}} \{(c p^{-1}, c p^{-2}): c \in [0,1]\} \subseteq \RR^2,
\]
where
\[
(cp^{-1}, cp^{-2})(p^a m) = (1-c)^a \quad (a,m \in \ZZ; a \geq 0; m \not\equiv 0 \pmod{p}).
\]
In particular, each neighborhood of $(0,0)$ (the trivial norm) contains the complement of some finite union of the given
segments.
\end{example}

\begin{example} \label{exa:closed unit disc}
For $K$ a field complete for a multiplicative norm, equip $K[T]$ with the Gauss norm.
The points of $\calM(K[T])$ (the \emph{closed unit disc} over $K$) have been classified by Berkovich \cite[\S 1]{berkovich1} when $K$ is algebraically closed
(see also \cite[\S 2]{kedlaya-semi4} for the general case).
For instance, for each $z \in K$ with $|z| \leq 1$ and each $r \in [0,1]$, the formula
\begin{equation} \label{eq:taylor formula}
 \alpha_{z,r}(f) = \max_i \left\{r^i \left| \frac{1}{i!} \frac{d^i f}{dT^i}(z) \right| \right\}
\end{equation}
defines a point $\alpha_{z,r} \in \calM(K[T])$;
these comprise all points of $\calM(K[T])$ if and only if $K$ is spherically complete and algebraically closed.
For applications of this space to dynamical systems on the projective line, see 
\cite{baker-rumely}.
\end{example}

\section{Witt vectors}

We will make extensive use of the Witt vectors over a perfect $\Fp$-algebra. Even if these are familiar, some facts about their
Gel'fand spectra may not be.

\begin{defn}
For $R$ a perfect $\Fp$-algebra (i.e., a ring in which $p=0$ and the $p$-th power map is a bijection), 
let $W(R)$ denote the ring of $p$-typical Witt vectors over $R$.
The definition of $W(R)$ may be reconstructed from the following key properties.
\begin{enumerate}
\item[(a)]
The ring $W(R)$ is $p$-adically complete and separated, and $W(R)/(p) \cong R$.
\item[(b)]
For each $r \in R$, there is a unique lift $[r]$ of $r$ to $W(R)$ (the \emph{Teichm\"uller lift})
having $p^n$-th roots for all positive integers $n$.
\item[(c)]
Each $x \in W(R)$ admits a unique representation
$\sum_{i=0}^\infty p^i [x_i]$ with $x_i \in R$.
\end{enumerate}
Since the construction of $W(R)$ is functorial in $R$, $W(R)$ also carries an automorphism $\phi$ (the
\emph{Witt vector Frobenius}) lifting the $p$-power Frobenius map on $R$.
We equip $W(R)$ with the \emph{normalized $p$-adic norm}, that is, the
norm of a nonzero element
$\sum_{i=0}^\infty p^i [x_i]$ equals $p^{-j}$ for $j$ the smallest index with $x_j \neq 0$.
\end{defn}

If $R$ carries a submultiplicative norm and $R[T]$ carries the corresponding Gauss norm, 
we obtain a map $\lambda: \calM(R) \to \calM(R[T])$ taking each seminorm on $R$ to its
Gauss extension, and a map $\mu: \calM(R[T]) \to \calM(R)$ induced by the 
inclusion $R \to R[T]$. For $R$ a perfect $\Fp$-algebra, we have similar maps between $\calM(R)$
and $\calM(W(R))$, with the role of the inclusion $R \to R[T]$ played by the multiplicative
(but not additive) Teichm\"uller map.

\begin{lemma} \label{L:lambda}
Equip $R$ with the trivial norm.
For $\alpha \in \calM(R)$, the function
$\lambda(\alpha): W(R) \to [0,1]$ given by
\[
\lambda(\alpha) \left( \sum_{i=0}^\infty p^i [x_i]
\right) = \max_i \{ p^{-i} \alpha(x_i) \}.
\]
is a multiplicative seminorm bounded by the $p$-adic norm, and so belongs to $\calM(W(R))$.
\end{lemma}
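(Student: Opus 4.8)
The plan is to check that $\lambda(\alpha)$ is dominated by the $p$-adic norm and satisfies the axioms of a multiplicative seminorm; the domination and the normalizations $\lambda(\alpha)(0)=0$, $\lambda(\alpha)(1)=1$ are immediate, the two inequalities $\lambda(\alpha)(x+y)\le\max\{\lambda(\alpha)(x),\lambda(\alpha)(y)\}$ and $\lambda(\alpha)(xy)\le\lambda(\alpha)(x)\lambda(\alpha)(y)$ will follow from the shape of the Witt operations, and the reverse inequality $\lambda(\alpha)(xy)\ge\lambda(\alpha)(x)\lambda(\alpha)(y)$ is the real content. For the domination: since $\alpha$ is dominated by the trivial norm, $\alpha(x_i)\le 1$ for all $i$, so if $j$ is least with $x_j\ne 0$ then $p^{-i}\alpha(x_i)=0$ for $i<j$ and $\le p^{-i}\le p^{-j}$ for $i\ge j$, whence $\lambda(\alpha)(\sum_i p^i[x_i])\le p^{-j}$; in particular $\lambda(\alpha)$ is $[0,1]$-valued. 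And $\lambda(\alpha)(1)=\alpha(1)=1$ since $1=[1]$.

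For the two upper bounds, recall that over a perfect $\Fp$-algebra the Witt operations take a clean form in Teichm\"uller coordinates: if $x=\sum_i p^i[x_i]$, $y=\sum_i p^i[y_i]$, $x+y=\sum_n p^n[z_n]$ and $xy=\sum_n p^n[z'_n]$, then — reducing the universal addition/multiplication polynomials mod $p$ and using that Frobenius is bijective on $R$ — $z_n$ (resp.\ $z'_n$) is a finite $\Fp$-linear combination of monomials $\prod_i x_i^{a_i}y_i^{b_i}$ with $a_i,b_i$ nonnegative rationals with $p$-power denominators, with $\sum_i(a_i+b_i)=1$ (resp.\ $=2$) and with the monomials occurring satisfying $\sum_i i(a_i+b_i)\le n$ (the homogeneity comes from $[c](x+y)=[c]x+[c]y$ and $[c]x\cdot y=[c](xy)$; the inequality comes from $z_n,z'_n$ depending only on Witt components of index $\le n$, using in the multiplicative case that $[x_i][y_j]$ enters $xy$ at level $i+j$). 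Since $\alpha$ is multiplicative, $\alpha(x^a)=\alpha(x)^a$ for these exponents and $\alpha(c)\le 1$; writing $M=\max\{\lambda(\alpha)(x),\lambda(\alpha)(y)\}$ so that $\alpha(x_i),\alpha(y_i)\le p^iM$, each monomial of $z_n$ has $\alpha$-value at most $p^{\sum_i i(a_i+b_i)}M^{\sum_i(a_i+b_i)}\le p^nM$, and similarly each monomial of $z'_n$ has $\alpha$-value at most $p^n\lambda(\alpha)(x)\lambda(\alpha)(y)$; this gives the two bounds.

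The reverse inequality is the crux, and I would prove it by reducing to a valued field. The construction $\lambda$ is functorial: for a homomorphism $f\colon R\to S$ of perfect $\Fp$-algebras and $\beta\in\calM(S)$, the formula gives $\lambda(\beta\circ f)=\lambda(\beta)\circ W(f)$ on $W(R)$. Factor $\alpha$ through $R\to R/\alpha^{-1}(0)\hookrightarrow\Frac(R/\alpha^{-1}(0))\hookrightarrow\calH(\alpha)$: here $\alpha^{-1}(0)$ is prime, each quotient, localization and completion of a perfect ring occurring is again perfect, and — since $\alpha\le 1$ — this map lands in the valuation ring $\calO:=\calO_{\calH(\alpha)}$, a perfect $\Fp$-algebra. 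Hence $\lambda(\alpha)=\|\cdot\|\circ W(f)$ with $W(f)\colon W(R)\to W(\calO)$ a ring homomorphism and $\|\sum_i p^i[w_i]\|:=\max_i p^{-i}|w_i|$; as the pullback of a multiplicative seminorm along a ring homomorphism is one, it is enough to show $\|\cdot\|$ is multiplicative on $W(\calO)$. Set $v=-\log_p|\cdot|$ and, for $0\ne w=\sum_i p^i[w_i]\in W(\calO)$, $v_W(w)=\min_i(i+v(w_i))$ (a nonnegative real). Let $\ell_x$ be the sum of the terms $p^i[x_i]$ of $x$ with $i+v(x_i)=v_W(x)$, and $\ell_y$ the corresponding sum for $y$; then $x=\ell_x+r_x$ and $y=\ell_y+r_y$ with $v_W(r_x)>v_W(x)$, $v_W(r_y)>v_W(y)$, so by the ultrametric inequality (already proved) $v_W(xy)=v_W(\ell_x\ell_y)$ provided $v_W(\ell_x\ell_y)=v_W(x)+v_W(y)$. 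To see the latter, write $\ell_x=[u_x]g_x$ where $v(u_x)$ is the least of the $v(x_i)$ over the indices present in $\ell_x$; using the (immediate) identity $v_W([c]w)=v(c)+v_W(w)$ one checks that $g_x\in W(\calO)$, that its reduction $\bar g_x$ in $W(k)$ — $k$ the residue field of $\calO$ — is a single nonzero Teichm\"uller term, and that $v_W(g_x)$ equals the $p$-adic valuation of $\bar g_x$; likewise for $g_y$. As $k$ is a field, $\bar g_x\bar g_y$ is again a single nonzero Teichm\"uller term, so its $p$-adic valuation is $v_W(g_x)+v_W(g_y)$; combining the trivial bound $v_W(w)\le(\text{$p$-adic valuation of }\bar w)$ with submultiplicativity forces $v_W(g_xg_y)=v_W(g_x)+v_W(g_y)$, and carrying this back through the Teichm\"uller factorizations yields $v_W(\ell_x\ell_y)=v_W(x)+v_W(y)$.

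The hard part is exactly this reverse inequality: the other properties hold for any perfect $\Fp$-algebra with a submultiplicative norm, but multiplicativity genuinely uses that $\alpha$ is multiplicative — so that $\calH(\alpha)$ is a field and no cancellation can occur among the leading Witt coordinates of a product — and the argument seems to require descending all the way to the Witt vectors of the residue field, where the claim becomes the fact that $W(k)$ is an integral domain.
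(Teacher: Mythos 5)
Your proof is correct, and for the main point it takes a genuinely different route from the paper's. The treatment of the ultrametric inequality and of submultiplicativity via the homogeneity of the Witt addition and multiplication polynomials is essentially the paper's argument (the paper actually deduces submultiplicativity from the seminorm property by expanding $xy$ as $\sum_{i,j}p^i[x_i]p^j[y_j]$, but this is a cosmetic difference). For the reverse inequality $\lambda(\alpha)(xy)\geq\lambda(\alpha)(x)\lambda(\alpha)(y)$, the paper stays entirely inside $W(R)$: taking $j$ (resp.\ $k$) to be the \emph{smallest} index at which $p^{-j}\alpha(x_j)$ (resp.\ $p^{-k}\alpha(y_k)$) attains its maximum and truncating $x,y$ below those indices to get $x',y'$, the already-proved inequalities give $\lambda(\alpha)(xy)=\lambda(\alpha)(x'y')$, and the lowest Teichm\"uller coordinate of $x'y'$ is \emph{exactly} $x_jy_k$ in level $j+k$, so multiplicativity of $\alpha$ on $R$ finishes at once. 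You instead push forward to $W(\calO)$ for $\calO$ the valuation ring of $\calH(\alpha)$ and analyze the full leading form (all indices attaining $\min_i(i+v(x_i))$), which forces you to rule out cancellation among several competing terms; that is exactly why you need the Teichm\"uller factorization and the fact that $W(k)$ is a domain. Your reduction is sound --- the quotient by the prime $\alpha^{-1}(0)$, its fraction field, the completion and its valuation ring are all perfect, and $\lambda$ is compatible with $W(f)$ --- and it has the merit of exhibiting $\lambda(\alpha)$ as the pullback of a canonical norm on the Witt vectors of a nonarchimedean field, in the spirit of the maps $\lambda,\mu$ studied afterwards. But the paper's device of truncating at the \emph{first} maximizing index (rather than collecting all minimizing terms) makes the leading coefficient of the product a single exactly computable term, and renders the whole detour through the residue field unnecessary.
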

\begin{proof}
Let $x = \sum_{i=0}^\infty p^i [x_i],
y = \sum_{i=0}^\infty p^i [y_i]$ be two general elements of $W(R)$.
If we write $x+y = \sum_{i=0}^\infty p^i [z_i]$, then each $z_i$
is a polynomial in $x_j^{p^{j-i}},y_j^{p^{j-i}}$ for $j=0,\dots,i$,
which is homogeneous of degree 1 for the weighting in which $x_j,y_j$ have degree 1.
It follows that $\lambda(\alpha)(x+y) \leq \max\{\lambda(\alpha)(x),
\lambda(\alpha)(y)\}$, so $\lambda(\alpha)$ is a seminorm. This in turn implies that
\begin{align*}
\lambda(\alpha)(xy) &\leq \max_{i,j} \{\lambda(\alpha)(p^i[x_i] p^j [y_j])\} \\
&\leq \lambda(\alpha)(x) \lambda(\alpha)(y),
\end{align*}
so $\lambda(\alpha)$ is submultiplicative.
To check multiplicativity, we may safely assume
$\lambda(\alpha)(x), \lambda(\alpha)(y) > 0$. Choose the minimal indices
$j,k$ for which $\lambda(\alpha)(p^j [x_j])$, 
$\lambda(\alpha)(p^k [y_k])$ attain their maximal values. For
\[
x' = \sum_{i=j}^{\infty} p^i [x_i],
\qquad
y' = \sum_{i=k}^{\infty} p^i [y_i],
\]
on one hand we have $\lambda(\alpha)(x-x') < \lambda(\alpha)(x)$, $\lambda(\alpha)(y-y') < \lambda(\alpha)(y)$.
Since $\lambda(\alpha)$ is a submultiplicative seminorm,
we get that $\lambda(\alpha)(xy) = \lambda(\alpha)(x'y')$.
On the other hand, we may write $x'y' = \sum_{i=j+k}^\infty p^i [z_i]$ with
$z_{j+k} = x_j y_k$.
Therefore $\lambda(\alpha)(x'y') \geq \lambda(\alpha)(x) \lambda(\alpha)(y)$. Putting everything together,
we deduce that $\lambda(\alpha)$ is multiplicative.
\end{proof}

\begin{lemma}\label{L:retract}
Equip $W(R)$ with the $p$-adic norm.
For $\beta \in \calM(W(R))$, the function $\mu(\beta): R \to [0,1]$
given by
\[
\mu(\beta)(x) = \beta([x])
\]
is a multiplicative seminorm bounded by the trivial norm, and so belongs to $\calM(R)$.
\end{lemma}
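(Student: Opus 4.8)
The plan is to verify in turn that $\mu(\beta)$ is multiplicative (including $\mu(\beta)(1) = 1$), that it is bounded by the trivial norm, and that it satisfies the nonarchimedean seminorm axioms; only the last point requires any real work. Multiplicativity is immediate from the multiplicativity of the Teichm\"uller map and of $\beta$: since $[xy] = [x][y]$, we have $\mu(\beta)(xy) = \beta([x][y]) = \beta([x])\beta([y]) = \mu(\beta)(x)\mu(\beta)(y)$, and $\mu(\beta)(1) = \beta([1]) = \beta(1) = 1$. (In particular $\beta(-1) = 1$, since $\beta(-1)^2 = \beta(1) = 1$, so $\beta$ satisfies the full ultrametric inequality $\beta(a+b) \leq \max\{\beta(a), \beta(b)\}$.) For the bound by the trivial norm: if $x \neq 0$, then in the representation $[x] = \sum_{i \geq 0} p^i [x_i]$ we have $x_0 = x \neq 0$, so $[x]$ has $p$-adic norm $1$, whence $\mu(\beta)(x) = \beta([x]) \leq 1$ because $\beta$ is bounded by the $p$-adic norm; and $\mu(\beta)(0) = \beta(0) = 0$, which in particular gives seminorm axiom~(b).

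The substance is seminorm axiom~(a): $\beta([x-y]) \leq \max\{\beta([x]), \beta([y])\}$ for all $x, y \in R$. The obstruction is that the Teichm\"uller map is not additive. What one does have is $[x-y] \equiv [x] - [y] \pmod{p}$, both sides reducing to $x - y$ in $R = W(R)/(p)$, so $[x-y] = ([x] - [y]) + pw$ for some $w \in W(R)$; since $\beta$ satisfies the ultrametric inequality and $\beta(pw) \leq p^{-1}$ (the element $pw$ having $p$-adic norm at most $p^{-1}$, and $\beta$ being bounded by the $p$-adic norm), this yields the crude estimate
\[
\beta([x-y]) \leq \max\{\beta([x]), \beta([y]), p^{-1}\}.
\]
To remove the spurious term $p^{-1}$ I would use that $R$ is perfect: for every $n \geq 1$ the elements $x^{1/p^n}, y^{1/p^n}$ lie in $R$, and since the $p$-power map is a ring endomorphism of $R$, $(x^{1/p^n} - y^{1/p^n})^{p^n} = x - y$, hence $[x-y] = [x^{1/p^n} - y^{1/p^n}]^{p^n}$. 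Applying the crude estimate to $x^{1/p^n}$ and $y^{1/p^n}$, raising both sides to the $p^n$-th power (which preserves the ordering on $[0, +\infty)$ and commutes with the maximum), and using $\beta([z^{1/p^n}])^{p^n} = \beta([z^{1/p^n}]^{p^n}) = \beta([z])$, I obtain
\[
\beta([x-y]) \leq \max\{\beta([x]), \beta([y]), p^{-p^n}\}.
\]
Letting $n \to \infty$ annihilates the term $p^{-p^n}$ and gives axiom~(a); the degenerate case $\beta([x]) = \beta([y]) = 0$ is covered as well, the right-hand side then tending to $0$.

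The one genuine obstacle is thus the failure of additivity of the Teichm\"uller lift in axiom~(a); the device of passing to $p^n$-th roots and back --- legitimate precisely because $R$ is perfect and $\beta$ is multiplicative --- is what upgrades the harmless-looking crude estimate to the sharp inequality. Everything else is formal.
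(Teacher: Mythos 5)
Your proof is correct and rests on the same mechanism as the paper's: both exploit the perfectness of $R$ together with the multiplicativity of $\beta$ to raise a mod-$p$ discrepancy between the Teichm\"uller lift of a sum and the sum of lifts to the $p^n$-th power, so that the error term ($p^{-p^n}$ in your version) vanishes in the limit. The paper packages this as the convergence $\phi^{-n}(z^{p^n}) \to [z_0]$ for an arbitrary lift $z$ of $z_0$, whereas you use the exact identity $[x-y] = [x^{1/p^n}-y^{1/p^n}]^{p^n}$ plus a crude estimate; these are the same limit argument in different clothing, and your handling of the degenerate case $\beta([x])=\beta([y])=0$ is likewise sound.
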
\begin{proof}
Given $x_0, y_0 \in R$, choose any $x,y \in W(R)$ lifting them.
For
$(z_0, z) = (x_0,x), (y_0,y), (x_0+y_0, x+y)$,
for any $\epsilon >0$,
for $n$ sufficiently large (depending on $z,z_0,\epsilon$), we have
$\max\{\epsilon,\mu(\beta)(z_0)\} = \max\{\epsilon,\beta(\phi^{-n}(z))^{p^n}\}$ because $\phi^{-n}(z^{p^n})$ converges
$p$-adically to $[z_0]$.
Since $\beta$ is a multiplicative seminorm, we deduce the same for $\mu(\beta)$. 
\end{proof}

\begin{theorem} \label{T:lifting}
Equip $R$ with the trivial norm and $W(R)$ with the $p$-adic norm.
Then the functions $\lambda: \calM(R) \to \calM(W(R)), \mu: \calM(W(R)) \to \calM(R)$ are continuous.
Moreover, for any $\alpha \in \calM(R), \beta \in \calM(W(R))$, we have $(\mu \circ \lambda)(\alpha) = \alpha$
and $(\lambda \circ \mu)(\beta) \geq \beta$. (The latter means that for any $x \in W(R)$,
$(\lambda \circ \mu)(\beta)(x) \geq \beta(x)$.)
\end{theorem}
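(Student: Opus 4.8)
The plan is to treat the four assertions separately, invoking Lemmas~\ref{L:lambda} and~\ref{L:retract} so that $\lambda$ and $\mu$ are already known to land in the respective Gel'fand spectra; what remains is continuity and the two comparisons of composites. Continuity of $\mu$ is immediate from the subbasis description of the topology: a subbasic open subset of $\calM(R)$ has the form $\{\alpha : \alpha(x) \in I\}$ for some $x \in R$ and open interval $I \subseteq \RR$, and its preimage under $\mu$ is $\{\beta \in \calM(W(R)) : \beta([x]) \in I\}$, which is again subbasic open since $[x] \in W(R)$.

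For continuity of $\lambda$, fix $f = \sum_{i=0}^\infty p^i [f_i] \in W(R)$; it suffices to show that $\alpha \mapsto \lambda(\alpha)(f) = \max_i \{p^{-i}\alpha(f_i)\}$ is continuous on $\calM(R)$. Here is where the hypothesis that $R$ carries the trivial norm is used: since every $\alpha \in \calM(R)$ is bounded by the trivial norm, $\alpha(f_i) \leq 1$, so the finite partial maxima $M_N(\alpha) = \max_{0 \leq i \leq N}\{p^{-i}\alpha(f_i)\}$ satisfy $0 \leq \lambda(\alpha)(f) - M_N(\alpha) \leq p^{-(N+1)}$ for all $\alpha$. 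Each $M_N$ is a maximum of finitely many functions $\alpha \mapsto p^{-i}\alpha(f_i)$, which are continuous by the very definition of the topology on $\calM(R)$; hence $\lambda(\,\cdot\,)(f)$ is a uniform limit of continuous functions and so is continuous. This uniform-convergence estimate is the only step that is not purely formal, and it is the place where the trivial-norm hypothesis is genuinely used.

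For $(\mu \circ \lambda)(\alpha) = \alpha$: given $x \in R$, we have $(\mu \circ \lambda)(\alpha)(x) = \lambda(\alpha)([x])$, and since $[x]$ is already in Teichm\"uller form --- its degree-$0$ component is $x$ and all higher components vanish, by the uniqueness in property~(c) --- the defining formula for $\lambda(\alpha)$ collapses to $\alpha(x)$. For $(\lambda \circ \mu)(\beta) \geq \beta$: given $x = \sum_{i=0}^\infty p^i [x_i] \in W(R)$, the ultrametric inequality together with multiplicativity of $\beta$ gives $\beta(x) \leq \max_i \beta(p^i [x_i]) = \max_i \beta(p)^i \beta([x_i])$, and since $\beta$ is bounded by the $p$-adic norm we have $\beta(p) \leq p^{-1}$; therefore $\beta(x) \leq \max_i p^{-i}\beta([x_i]) = \max_i p^{-i}\mu(\beta)(x_i) = (\lambda \circ \mu)(\beta)(x)$, as required.
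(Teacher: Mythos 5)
Your proof is correct and follows essentially the same route as the paper's: continuity of $\mu$ by pulling back subbasic opens, continuity of $\lambda$ from the fact that the trivial norm forces the tail terms $p^{-i}\alpha(f_i)$ to be uniformly small (the paper phrases this as an explicit description of the preimages of $\{\lambda(\alpha)(x)>\epsilon\}$ and $\{\lambda(\alpha)(x)<\epsilon\}$ rather than as a uniform limit, but the key estimate is identical), and the two composite identities by direct computation from the definitions. No gaps.
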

\begin{proof}
For $x = \sum_{i=0}^\infty p^i [x_i] \in W(R)$ and $\epsilon > 0$, choose $j > 0$ for which $p^{-j} < \epsilon$; then
$\lambda(\alpha)(p^i [x_i]) < \epsilon$ for all $\alpha \in \calM(R)$ and all $i \geq j$. We thus have
\begin{align*}
\{\alpha \in \calM(R): \lambda(\alpha)(x) > \epsilon\} &= \bigcup_{i=0}^{j-1}
\{\alpha \in \calM(R): \alpha(x_i) > p^i \epsilon \} \\
\{\alpha \in \calM(R): \lambda(\alpha)(x) < \epsilon\} &= \bigcap_{i=0}^{j-1}
\{\alpha \in \calM(R): \alpha(x_i) < p^i \epsilon \},
\end{align*}
and the sets on the right are open. It follows that $\lambda$ is continuous.

For $x_0 \in R$ and $\epsilon > 0$, we have
\begin{align*}
\{\beta \in \calM(W(R)): \mu(\beta)(x_0) > \epsilon\}
&=
\{\beta \in \calM(W(R)): \beta([x_0]) > \epsilon\} \\
\{\beta \in \calM(W(R)): \mu(\beta)(x_0) < \epsilon\}
&=
\{\beta \in \calM(W(R)): \beta([x_0]) < \epsilon\},
\end{align*}
and the sets on the right are open. It follows that $\mu$ is continuous.

The equality $(\mu \circ \lambda)(\alpha) = \alpha$ is evident from the definitions. The inequality
$(\lambda \circ \mu)(\beta) \geq \beta$ follows from the definition of $\lambda$ and the observation
that $(\lambda \circ \mu)(\beta)([x_0]) = \beta([x_0])$ for any $x_0 \in R$.
\end{proof}

\begin{example}
Here is a simple example to illustrate that $\lambda \circ \mu$ need not be the identity map.
Put $R = \cup_{n=1}^\infty \Fp[X^{p^{-n}}]$, so that $W(R)$ is isomorphic to the $p$-adic completion of
$\cup_{n=1}^\infty \Zp[[X]^{p^{-n}}]$. The ring $W(R)/([X]-p)$ is isomorphic to the completion of
$\cup_{n=1}^\infty \Zp[p^{p^{-n}}]$ for the unique multiplicative extension of the $p$-adic norm; let 
$\beta \in \calM(W(R))$ be the induced seminorm.

Note that $\mu(\beta)(X) = \beta([X]) = p^{-1}$ and that $\mu(\beta)(y) = 1$ for $y \in \Fp^\times$.
These imply that $\mu(\beta)(y) \leq p^{-p^{-n}}$ whenever $y \in \Fp[X^{p^{-n}}]$ is divisible by $X^{p^{-n}}$, so
$\mu(\beta)(y) = 1$ whenever $y \in \Fp^\times + X^{p^{-n}} \Fp[X^{p^{-n}}]$. We conclude that for
$y \in R$, $\mu(\beta)(y)$ equals the $X$-adic norm of $y$ with the normalization $\mu(\beta)(X) = p^{-1}$.
In particular, we have a strict inequality $(\lambda \circ \mu)(\beta) > \beta$.
\end{example}

\begin{remark}
There is a strong analogy between the geometry of the fibres of $\mu$ and the geometry of closed discs 
(see Example~\ref{exa:closed unit disc}).
This suggests the possibility of constructing 
a homotopy between the  map $\lambda \circ \mu$ on $\calM(W(R))$ and the identity map,
which acts within fibres of $\mu$ and fixes the image of $\lambda \circ \mu$; such a construction would imply that any subset
of $\calM(R)$ has the same homotopy type as its inverse image under $\mu$. 
Such a homotopy does in fact exist; see \cite{kedlaya-wittgeometry}.
\end{remark}

\section{Filtered isocrystals and weak admissibility}

To simplify the exposition, we introduce filtered isocrystals only for the group $\GL_n$. 
One can generalize to an arbitrary reductive Lie group (see \cite[Chapter~1]{rapoport-zink} for the setup),
but the general results  can be deduced from the $\GL_n$ case.

\begin{defn}
Put $K_0 = \Frac W(\Fp^{\alg})$. 
An \emph{isocrystal} over $K_0$ is a finite-dimensional $K_0$-vector
space equipped with an invertible semilinear action of the Witt vector Frobenius $\phi$.
For $D$ a nonzero isocrystal over $K_0$, the \emph{degree} $\deg(D)$ of $D$ is the $p$-adic valuation
of the determinant of the matrix via which $\phi$ acts on some (and hence any) basis of $D$. 
The \emph{slope} of $D$ is the ratio $\mu(D) = \deg(D)/\rank(D)$.
\end{defn}

\begin{defn}
Let $K$ be a complete extension of $K_0$ (not necessarily discretely valued). A \emph{filtered isocrystal}
over $K$ consists of an isocrystal $D$ over $K_0$ equipped with an exhaustive decreasing filtration
$\{\Fil^i D_K\}_{i \in \ZZ}$ on $D_K = D \otimes_{K_0} K$. 
The \emph{Hodge-Tate weights} are then defined as the multiset
containing $i \in \ZZ$ with multiplicity $\dim_K (\Fil^i D_K)/(\Fil^{i+1} D_K)$. 

For $D$ an isocrystal over $K_0$ and $H$ a finite multiset of integers, let $\calF_{D,H}$ be the partial flag variety
parametrizing exhaustive decreasing filtrations on $D$ with Hodge-Tate weights $H$. Let $\calF^{\an}_{D,H}$
be the analytification of $\calF_{D,H}$ in the sense of \cite[Theorem~3.4.1]{berkovich1}.
\end{defn}
Beware that our notion of filtered isocrystals is slightly nonstandard; 
for instance, if $K$ is a finite extension of $K_0$, one would normally
replace $K_0$ by its maximal unramified extension within $K$. Since our ultimate goal is to
fix an isocrystal structure and vary the filtration, this discrepancy is not so harmful.

\begin{lemma} \label{L:cover by discs}
Equip $K_0[T_1^{\pm}, \dots, T_d^{\pm}]$ with the Gauss norm. Then
$\calF^{\an}_{D,H}$ is covered by finitely many copies of
$\calM(K_0[T_1^{\pm}, \dots, T_d^{\pm}])$.
\end{lemma}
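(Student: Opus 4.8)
The plan is to reduce everything to the structure theory of (split, hence $\ZZ$-defined) flag varieties. Forgetting the Frobenius, $\calF_{D,H}$ parametrizes filtrations of prescribed type on the $n$-dimensional $K_0$-vector space underlying $D$ (with $n = \rank D$), so it is isomorphic to $\GL_n/P$ for $P$ the parabolic stabilizing one such filtration; in particular it is the base change to $K_0$ of a smooth proper $\ZZ$-scheme $\mathcal{X}$. By the Bruhat decomposition, $\mathcal{X}$ is covered, already over $\ZZ$, by finitely many open ``big cells'' $U_1,\dots,U_N$ (with $N = \#(W/W_P)$), each isomorphic to $\bA^d_{\ZZ}$ for $d = \dim \calF_{D,H}$. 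I will then cover $\calF^{\an}_{D,H}$ by finitely many Laurent subdomains of these big cells, each isomorphic to $\calM(K_0[T_1^{\pm},\dots,T_d^{\pm}])$.

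The key step is to show that every point $\alpha \in \calF^{\an}_{D,H}$ lies in the closed unit polydisc of some $U_i$, i.e.\ that for some $i$ the coordinate functions $w_1,\dots,w_d$ on $U_i$ satisfy $\alpha(w_j) \le 1$ for all $j$; here properness does the work. The point $\alpha$ gives a $\calH(\alpha)$-valued point of $\calF_{D,H}$, hence a morphism $\Spec \calH(\alpha) \to \mathcal{X}$ over $\ZZ$. Since $\calH(\alpha)$ is complete for a rank-one valuation, bounded by the Gauss norm (so that its valuation ring $\calH(\alpha)^\circ$ contains $\Zp$), the valuative criterion of properness extends this uniquely to $\Spec \calH(\alpha)^\circ \to \mathcal{X}$. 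The closed point of $\Spec \calH(\alpha)^\circ$ maps into the special fibre $\mathcal{X}_{\Fp}$ (base changed to the residue field of $\calH(\alpha)$), hence into some $U_{i,\Fp}$; since $U_i$ is open and the only open subscheme of $\Spec \calH(\alpha)^\circ$ containing the closed point is the whole of it, the map factors through $U_i \cong \bA^d_{\ZZ}$. Thus the coordinates of $\alpha$ lie in $\calH(\alpha)^\circ$, which is exactly $\alpha(w_j) \le 1$. (One could instead argue by an explicit reduction-theory computation — pick the big cell so as to maximize a suitable product of Plücker coordinates of $\alpha$, then check the resulting echelon coordinates are bounded — but the valuative criterion makes the boundedness automatic; this is really the only subtle point in the proof.)

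It then remains to cover the closed unit polydisc $\calM(K_0\langle w_1,\dots,w_d\rangle)$ inside each $U_i$ by finitely many copies of $\calM(K_0[T_1^{\pm},\dots,T_d^{\pm}])$. For each $j$, if $\alpha$ is a multiplicative seminorm with $\alpha(w_j) \le 1$, the ultrametric inequality forces $\alpha(w_j) = 1$ or $\alpha(1-w_j) = 1$; so the polydisc is the union, over the $2^d$ choices of functions $v_j \in \{w_j,\, 1-w_j\}$, of the Laurent subdomains $V_{(v_1,\dots,v_d)} = \{\alpha : \alpha(v_j) \ge 1 \text{ for all } j\}$. The coordinate ring of $V_{(v_1,\dots,v_d)}$ is $K_0\langle w_1,\dots,w_d\rangle\langle v_1^{-1},\dots,v_d^{-1}\rangle$; since $w_j \mapsto v_j$ is an affine-linear, hence isometric, automorphism of $d$-dimensional affine space, this ring is just $K_0\langle v_1^{\pm},\dots,v_d^{\pm}\rangle$, the Gauss-norm completion of $K_0[T_1^{\pm},\dots,T_d^{\pm}]$ under $T_j \mapsto v_j$, so no completion subtleties arise. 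Hence each $V_{(v_1,\dots,v_d)}$ is a copy of $\calM(K_0[T_1^{\pm},\dots,T_d^{\pm}])$, and running over the big cells and the sign choices exhibits $\calF^{\an}_{D,H}$ as a union of $N \cdot 2^d$ such subdomains.
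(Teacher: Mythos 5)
Your argument is correct and is essentially the paper's proof: cover an integral model of the flag variety by finitely many affine spaces (the paper suggests Pl\"ucker charts, you use translates of the big cell), use properness/specialization to the special fibre to see that every analytic point lands in the closed unit polydisc of one chart (the paper phrases this as taking analytic generic fibres of the $p$-adic formal completions of the charts, which encodes the same valuative-criterion argument you spell out), and then cover each closed polydisc by the $2^d$ Laurent domains on which each coordinate $w_j$ or $1-w_j$ has norm $1$. The only cosmetic caveat is that the open cover comes from the Weyl-translates of the big cell rather than from the Bruhat cells themselves, which are only locally closed.
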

\begin{proof}
We first observe that the closed unit disc $\calM(K_0[T])$ is covered by
$\calM(K_0[T^\pm])$ and $\calM(K_0[(T-1)^{\pm}])$. It follows that 
$\calM(K_0[T_1, \dots, T_d])$ is covered by finitely many copies of
$\calM(K_0[T_1^{\pm}, \dots, T_d^{\pm}])$. 

Let $\mathfrak{F}_{D,H}$ be the partial flag variety over $W(\Fp^{\alg})$ with generic fibre $\calF_{D,H}$.
Then $(\mathfrak{F}_{D,H})_{\Fp^{\alg}}$ is a partial flag variety over $\Fp^{\alg}$ of dimension $d$,
and so can be covered by 
finitely many $d$-dimensional affine spaces (e.g., using Pl\"ucker coordinates). 
Lifting such a covering to the $p$-adic formal completion of $\mathfrak{F}_{D,H}$,
then taking (Berkovich) analytic generic fibres, yields a covering of
$\calF^{\an}_{D,H}$ by finitely many copies of $\calM(K_0[T_1, \dots, T_d])$. By the previous paragraph,
this implies the desired result.
\end{proof}

\begin{defn}
Let $D$ be a filtered isocrystal over some complete extension of $K_0$.
Define
$t_N(D) = \deg(D)$, and let $t_H(D)$ be the sum of the Hodge-Tate weights of $D$.
We say $D$ is \emph{weakly admissible} if the following conditions hold.
\begin{enumerate}
\item[(a)]
We have $t_N(D) = t_H(D)$.
\item[(b)]
For any subisocrystal ($\phi$-stable subspace) $D'$ of $D$  equipped with the induced filtration, 
$t_N(D') \geq t_H(D')$.
\end{enumerate}
\end{defn}

Weak admissibility is an open condition, in the following sense.
\begin{theorem}
Let $\calF^{\wa}_{D,H}$ be the set of $\alpha \in \calF^{\an}_{D,H}$ for which $D$ becomes weakly admissible
when equipped with the filtration on $D_{\calH(\alpha)}$
induced by the universal filtration over $\calF_{D,H}$.
Then $\calF^{\wa}_{D,H}$ is open in $\calF^{\an}_{D,H}$.
\end{theorem}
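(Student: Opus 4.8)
The plan is to peel off the two conditions defining weak admissibility, reduce the nontrivial one to an upper-semicontinuity statement for the Hodge number, and then handle the (a priori infinite) supply of subisocrystals by a properness argument. First, condition~(a) is independent of the point: $t_N(D)=\deg(D)$ is intrinsic to the isocrystal $D$, while $t_H(D)$ is the sum of the elements of the fixed multiset $H$. Hence $\calF^{\wa}_{D,H}$ is either empty or equal to the locus where (b) holds, and it suffices to show that locus is open; equivalently, writing $Z_r$ for the set of $\alpha\in\calF^{\an}_{D,H}$ admitting a rank-$r$ subisocrystal $D'$ of $D$ with $t_H(D'_{\calH(\alpha)})>\deg(D')$ for the induced filtration on $D'_{\calH(\alpha)}$, I must show that $Z=\bigcup_{0<r<\rank D}Z_r$ is closed.

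Next I would settle the one-subisocrystal case. Fix a subisocrystal $D'$ of rank $r$ and an integer $a$ below every element of $H$, so that $\Fil^a D_{\calH(\alpha)}=D_{\calH(\alpha)}$ for every $\alpha$; a telescoping computation then gives
\[
t_H(D'_{\calH(\alpha)})=ar+\sum_{i>a}\dim_{\calH(\alpha)}\bigl(D'_{\calH(\alpha)}\cap\Fil^i D_{\calH(\alpha)}\bigr),
\]
a finite sum. On the $K_0$-variety $\calF_{D,H}$ the steps $\Fil^i$ of the universal filtration, together with the constant subbundle with fibre $D'$, are subbundles of the trivial bundle with fibre $D$, and the locus where two such subbundles meet fibrewise in dimension at least $k$ is cut out by the vanishing of minors, hence Zariski closed; pulling back along $\calF^{\an}_{D,H}\to\calF_{D,H}$ shows each $\alpha\mapsto\dim_{\calH(\alpha)}(D'_{\calH(\alpha)}\cap\Fil^i D_{\calH(\alpha)})$, and hence $\alpha\mapsto t_H(D'_{\calH(\alpha)})$, is upper semicontinuous. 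Since $t_H$ is $\ZZ$-valued, $\{\alpha:t_H(D'_{\calH(\alpha)})>\deg(D')\}$ is closed; if $D$ had only finitely many subisocrystals a finite union would already finish the proof.

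The real work is to treat all subisocrystals simultaneously, and here the subtlety is that $\phi$-stability of a subspace of $D$ cannot be imposed fibrewise over a $K_0$-analytic base, since $\calH(\alpha)$ carries no Frobenius. Instead, the Dieudonn\'e--Manin decomposition $D\cong\bigoplus_\lambda D_\lambda$ into isoclinic pieces and the Morita equivalence identifying slope-$\lambda$ isocrystals with modules over a $p$-adic division algebra $\Delta_\lambda$ show that the $\phi$-stable $K_0$-subspaces of $D$ of rank $r$ are precisely the $\Qp$-points of a projective $\Qp$-variety $\mathcal{G}_r$ --- a finite disjoint union of products of Grassmannians over the $\Delta_\lambda$ --- on which $\deg$ is locally constant, equal to $d$ say on the clopen piece $\mathcal{G}_{r,d}$, and which carries a universal subisocrystal. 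Now $\mathcal{G}_{r,d}(\Qp)$ is compact in its $p$-adic topology; applying the determinantal argument above on $\calF_{D,H}\times_{K_0}(\mathcal{G}_{r,d})_{K_0}$ (with the universal filtration and universal subisocrystal) and transporting to $\calF^{\an}_{D,H}\times\mathcal{G}_{r,d}(\Qp)$, the function $(\alpha,D')\mapsto t_H(D'_{\calH(\alpha)})$ is upper semicontinuous, so the locus where it exceeds $d$ is closed. Since $\mathcal{G}_{r,d}(\Qp)$ is compact, the projection to $\calF^{\an}_{D,H}$ is a closed map, whence the image of that locus --- exactly the set of $\alpha$ admitting a rank-$r$ degree-$d$ subisocrystal $D'$ with $t_H(D'_{\calH(\alpha)})>d$ --- is closed. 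The union over the finitely many pairs $(r,d)$ with $\mathcal{G}_{r,d}\neq\emptyset$ is $Z$, so $Z$ is closed and $\calF^{\wa}_{D,H}$ is open.

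I expect the main obstacle to be exactly this last step: pinning down the correct parameter space for subisocrystals --- a projective $\Qp$-variety whose $\Qp$-points (compact in the $p$-adic topology) are the subisocrystals of $D$, rather than a Zariski-dense but non-closed subset of a $K_0$-Grassmannian --- and equipping it with a genuine universal subisocrystal, so that the joint semicontinuity of $t_H$ and the compact-fibre projection become available. Granting that geometric input, the remainder is bookkeeping plus the standard semicontinuity of intersection dimension.
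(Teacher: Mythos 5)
The paper does not prove this statement itself but defers to \cite[Proposition~1.36]{rapoport-zink}, and your argument is essentially the standard one behind that reference: upper semicontinuity of $\alpha\mapsto t_H(D'_{\calH(\alpha)})$ for each fixed subisocrystal (via vanishing of minors computing intersection dimensions with the universal filtration), combined with compactness of the set of $\phi$-stable subspaces of fixed rank and degree, realized through Dieudonn\'e--Manin and Morita equivalence as the $\Qp$-points of a projective $\Qp$-variety. Your proof is correct; the only step worth writing out more fully is the joint continuity of $(\alpha,D')\mapsto \alpha(f(\cdot,D'))$ on $\calF^{\an}_{D,H}\times\mathcal{G}_{r,d}(\Qp)$ (the minors $f$ depend polynomially on the coordinates of $D'$, and $\alpha$ is bounded by a fixed norm, so this is routine), which is what makes the bad locus closed in the product before you apply the closed-projection lemma.
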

\begin{proof}
See \cite[Proposition~1.36]{rapoport-zink}.
\end{proof}

\begin{defn}
The tensor product of two filtrations $\Fil_1^\cdot, \Fil_2^\cdot$ is given by
\[
(\Fil_1 \otimes \Fil_2)^k = \sum_{i+j=k} \Fil_1^i \otimes \Fil_2^j.
\]
It is true but not immediate that the tensor product of two weakly admissible filtered isocrystals is weakly admissible; this 
was proved by Faltings \cite{faltings85} and
Totaro \cite{totaro}, using ideas from geometric invariant theory.
It also follows \emph{a posteriori} from describing weak admissibility in terms of
Galois representations, or in terms of isocrystals over the Robba ring (Theorem~\ref{T:Berger}).
\end{defn}

\section{Admissibility at rigid analytic points}

For the rest of the paper, fix an isocrystal $D$ over $K_0$ and a finite multiset $H$ of integers.
In this section, we follow Berger's proof of the Colmez-Fontaine theorem \cite{berger-adm}, forging a link
between filtered isocrystals and crystalline Galois representations via Frobenius modules over the Robba ring.
We will use this link as the basis
for our definition of the admissible locus of $\calF_{D,H}^{\an}$. (One could use instead Kisin's variant of
Berger's method \cite{kisin}; see Remark~\ref{R:Kisin}.)

\begin{defn}
Fix once and for all a completed algebraic closure $\CC_{K_0}$ of $K_0$
and a sequence $\epsilon = (\epsilon_0,\epsilon_1,\dots)$ in $\CC_{K_0}$
in which $\epsilon_i$ is a primitive $p^i$-th root of 1 and $\epsilon_{i+1}^p = \epsilon_i$.
(This is analogous to fixing a choice of $\sqrt{-1}$ in the complex numbers, in order to specify
orientations.) Write $K_0(\epsilon)$ as shorthand for
$\cup_{n=1}^\infty K_0(\epsilon_n)$.
\end{defn}

\begin{defn}
For $r>0$, define the valuation $v_r$ on $K_0[\pi^{\pm}]$ by the formula
\[
v_r\left(\sum_i a_i \pi^i \right) = \min_i \{v_p(a_i) + ir\}.
\]
Let $\calR^r$ be the Fr\'echet completion of $K_0[\pi^{\pm}]$ for the valuations
$v_s$ for $s \in (0,r]$; we may interpret $\calR^r$ as the ring of formal Laurent series over $K_0$
convergent in the range $v_p(\pi) \in (0,r]$. Put $\calR = \cup_{r>0} \calR^r$ (the \emph{Robba ring} over $K_0$) and
\[
t = \log (1 + \pi) = \sum_{i=1}^\infty \frac{(-1)^{i-1}}{i} \pi^i \in \calR.
\]
Let $\calR^{\bd}$ be the subring of $\calR$ consisting of series with bounded coefficients;
then $\calR^{\bd}$ is a henselian (but not complete) discretely valued field for the $p$-adic valuation,
with residue field $\Fp^{\alg}((\overline{\pi}))$. Let $\calR^{\inte}$ be the valuation subring of $\calR^{\bd}$.

Let $\phi: \calR \to \calR$ be the map
\[
\phi\left(\sum_i a_i \pi^i \right) = \sum_i \phi(a_i) ((1 + \pi)^p - 1)^i;
\]
note that $\phi(t) = pt$.
Define also an action of the group $\Gamma = \Zp^\times$ on $\calR$ by the formula
\[
\gamma\left(\sum_i a_i \pi^i \right) = \sum_i a_i ((1 + \pi)^\gamma - 1)^i;
\]
note that $\gamma(t) = \gamma t$, and that the action of $\Gamma$ commutes with $\phi$.
\end{defn}

\begin{defn} \label{D:base change}
For $r>0$ and $n$ a positive integer such that $r \geq 1/(p^{n-1}(p-1))$,
the series belonging to $\calR^r$ converge at $\epsilon_n-1$.
Moreover, $t$ vanishes to order 1 at $\epsilon_n - 1$.
We thus have a well-defined homomorphism $\theta_n: \calR^r \to K_0(\epsilon_n) \llbracket t \rrbracket$ with dense image.
We also have a commutative diagram
\begin{equation} \label{eq:base change}
\xymatrix{
\calR^r \ar^{\phi}[r] \ar^{\theta_n}[d] &
\calR^{r/p} \ar^{\theta_{n+1}}[d] \\
K_0(\epsilon_n) \llbracket t \rrbracket \ar[r] &
K_0(\epsilon_{n+1}) \llbracket t \rrbracket
}
\end{equation}
whenever $r \leq p/(p-1)$, in which the bottom horizontal arrow acts on $K_0$ via $\phi$, fixes $\epsilon_n$, and carries $t$ to $pt$.

Let $S$ be a finite \'etale algebra over $\calR^{\inte}$. Choose some $r$ for which $S$
can be represented as the base extension of a finite \'etale algebra $S_r$ over $\calR^{\inte} \cap \calR^r$.
For $n$ sufficiently large, we can then form $K_{S,n} = S_r \otimes_{\theta_n} K_0(\epsilon)$.
View the right side as a $K_0(\epsilon)$-algebra via the unique extension of $\phi^n$ fixing all of the $\epsilon_i$; then 
by \eqref{eq:base change}, $\phi$ induces an isomorphism $K_{S,n} \cong K_{S,n+1}$
of $K_0(\epsilon)$-algebras. We thus obtain functorially
from $S$ a finite \'etale algebra $K_S$ over $K_0(\epsilon)$.
\end{defn}

\begin{theorem} \label{T:finite etale}
The functor $S \mapsto K_S$ is an equivalence of categories.
\end{theorem}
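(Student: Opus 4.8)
The plan is to factor $S \mapsto K_S$ as a composite of two equivalences: a descent to the residue field at the source, and the field-of-norms equivalence at the target. First I would exploit henselianity. Since $\calR^{\bd}$ is a henselian discretely valued field, its valuation ring $\calR^{\inte}$ is a henselian local ring with residue field $E := \Fp^{\alg}((\overline{\pi}))$, so base change $S \mapsto \overline{S} := S \otimes_{\calR^{\inte}} E$ is an equivalence from finite \'etale $\calR^{\inte}$-algebras to finite \'etale $E$-algebras; it is moreover compatible with $\phi$, which reduces on $E$ to the $p$-power Frobenius (as $(1+\pi)^p - 1 \equiv \overline{\pi}^p$), and with the $\Gamma$-action. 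It therefore suffices to produce an equivalence from finite \'etale $E$-algebras to finite \'etale $K_0(\epsilon)$-algebras and to check that its composite with $S \mapsto \overline{S}$ is $S \mapsto K_S$.

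For the second equivalence I would invoke the theory of the field of norms of Fontaine and Wintenberger. The extension $K_0(\epsilon)/K_0$ is arithmetically profinite, and its field of norms is canonically identified with $E = \Fp^{\alg}((\overline{\pi}))$, the norm-compatible system $(\epsilon_n-1)_n$ corresponding to $\overline{\pi}$; this yields a canonical isomorphism $G_E \cong G_{K_0(\epsilon)}$ of absolute Galois groups, hence an equivalence $\mathrm{Fn}$ between finite \'etale $E$-algebras and finite \'etale $K_0(\epsilon)$-algebras. (Alternatively this equivalence can be reproved directly: full faithfulness is elementary, while essential surjectivity is the ramification-theoretic core, obtained from the classical comparison of the ramification filtrations along the two towers together with a Krasner-type approximation. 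One also uses here that $K_0(\epsilon)$, being algebraic over the complete field $K_0$, is henselian, so its finite \'etale algebras coincide with those of its completion.)

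It then remains to identify $\mathrm{Fn}(\overline{S})$ with $K_S$. Given $S$, choose $r$ and $S_r$ over $\calR^{\inte}\cap\calR^r$ as in Definition~\ref{D:base change}. The map $\theta_n\colon \calR^r \to K_0(\epsilon_n)\llbracket t\rrbracket$ is a Taylor expansion at $\pi = \epsilon_n-1$ with uniformizer $t$, so after reducing modulo $t$ and passing to residue rings it recovers the level-$n$ reduction map underlying the field of norms; by henselian rigidity of $K_0(\epsilon_n)\llbracket t\rrbracket$ over $K_0(\epsilon_n)$, forming $K_{S,n} = S_r \otimes_{\theta_n} K_0(\epsilon)$ amounts to base-changing $\overline{S}$ along that map. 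Finally, the commuting square \eqref{eq:base change} shows that the isomorphisms $K_{S,n} \cong K_{S,n+1}$ induced by $\phi$ are exactly the transition isomorphisms (given by norm maps, with the Frobenius twist of $\phi$ built in) that define $\mathrm{Fn}(\overline{S})$. Hence $K_S \cong \mathrm{Fn}(\overline{S})$ functorially, and since $S \mapsto \overline{S}$ and $\mathrm{Fn}$ are both equivalences, so is $S \mapsto K_S$.

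The hard part will be the field-of-norms input in the second step — in particular essential surjectivity, i.e.\ that every finite extension of $K_0(\epsilon)$ (equivalently, of its completion) arises from a finite separable extension of $\Fp^{\alg}((\overline{\pi}))$, which is the ramification-theoretic heart of the matter — together with the bookkeeping in the third step needed to confirm that the explicit $\theta_n$-construction really is the field-of-norms functor and not merely some equivalence. The henselian reductions at both ends, by contrast, should be routine.
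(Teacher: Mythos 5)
Your proposal is correct in outline, but it takes the route the paper deliberately sidesteps. The paper opens its proof by noting that the theorem ``is typically deduced from Fontaine's theory of $(\phi,\Gamma)$-modules'' --- i.e.\ exactly your factorization through the residue field $\Fp^{\alg}((\overline{\pi}))$ followed by the Fontaine--Wintenberger field-of-norms equivalence --- and then gives a self-contained argument instead. For full faithfulness the paper reduces to showing that connected objects go to connected objects: if $S$ is a field, a uniformizer of its residue field has an Eisenstein minimal polynomial over $\Fp^{\alg}\llbracket\overline{\pi}\rrbracket$, and applying $\theta_n$ to a lift yields an Eisenstein, hence irreducible, polynomial over $K_0(\epsilon_n)$, so $K_S$ is a field. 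For essential surjectivity it writes $L = L_n(\epsilon)$ and explicitly lifts a well-chosen generator's minimal polynomial to a residually separable polynomial $\tilde P$ over $\calR^{\inte}$, splitting into the tame case ($T^m - \tilde a$) and the wild case (where one arranges $\mathrm{Trace}(\alpha)\neq 0$ so that exactly the coefficients $\tilde a_0, \tilde a_{m-1}$ are units, keeping $\tilde P \bmod p$ separable); this is precisely an elementary stand-in for the ramification-theoretic heart of the field of norms that you correctly flag as the hard input on your route. The trade-off: your approach is modular and imports Fontaine--Wintenberger as a black box, but then owes the genuinely nontrivial compatibility check in your third step (that the $\theta_n$-construction computes the field-of-norms functor, not merely some equivalence) --- a check you assert rather than carry out, and which requires the quantitative estimates comparing $\theta_n$ of a lift with the $n$-th term of the associated norm-compatible sequence. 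The paper's construction avoids that comparison entirely because $K_S$ is \emph{defined} via $\theta_n$, at the price of redoing the Eisenstein/Krasner-style lifting arguments by hand. Both routes are sound; yours is less self-contained but would generalize more readily.
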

\begin{proof}
This is typically deduced from Fontaine's theory of $(\phi, \Gamma)$-modules \cite{fontaine-festschrift},
but it can also be obtained as follows. For full faithfulness, it suffices to check that if $S$ is a field, then so is $K_S$. For this, choose a uniformizer of the residue field of $S$,
and write down the minimal polynomial over $\Fp^{\alg}\llbracket \overline{\pi} \rrbracket$. This polynomial is Eisenstein,
so when we lift to $\calR^{\inte}$ and tensor with $\theta_n$, for $n$ large we get an Eisenstein (and hence irreducible) polynomial over $K_0(\epsilon_n)$. Hence $K_S$ is a field.

For essential surjectivity, it suffices to check that every field $L$ finite over $K_0(\epsilon)$ occurs as a $K_S$. For some positive integer $n$, we can write $L = L_n(\epsilon)$ for some finite extension
$L_n$ of $K_0(\epsilon_n)$ with $[L_n:K_0(\epsilon_n)] = [L:K_0(\epsilon)]$. 
Fix some $r \geq 1/(p^{n-1}(p-1))$.
Note that any $a \in K_0(\epsilon_n)$ with positive $p$-adic valuation can be lifted to $\tilde{a} 
\in \calR^{\inte} \cap \calR^r$ having positive $p$-adic valuation, while any nonzero  $a \in K_0(\epsilon_n)$
can be lifted to $\tilde{a} \in \calR^{\inte} \cap \calR^r$ having zero $p$-adic valuation.

If $L_n$ is tamely ramified over $K_0(\epsilon_n)$, then $L_n = K_0(\epsilon_n)(a^{1/m})$
for some positive integer $m$ not divisible by $p$ and some $a \in K_0(\epsilon_n)$ of positive valuation.
In this case, put $\tilde{P}(T) = T^m - a$ for some lift
$\tilde{a} \in \calR^{\inte} \cap \calR^r$ of $\phi^{-n}(a)$ having zero $p$-adic valuation.

If $L_n$ is wildly ramified over $K_0(\epsilon_n)$, choose $\alpha \in L_n$ of positive valuation
with $L_n = K_0(\epsilon_n)(\alpha)$ and $\mathrm{Trace}_{L_n/K_0(\epsilon_n)}(\alpha) \neq 0$. (We can enforce this last
condition by replacing $\alpha$ by $\alpha+p$ if needed.) Let $P(T) = T^m + \sum_{i=0}^{m-1} a_i T^i$ be the minimal polynomial of $\alpha$ over $K_0(\epsilon_n)$, so that $m$ is divisible by $p$ and $a_0, a_{m-1} \neq 0$.
Put $\tilde{P}(T) = T^m + \sum_{i=0}^{m-1} \tilde{a}_i T^i$ where each
$\tilde{a}_i \in \calR^{\inte} \cap \calR^r$ is a lift of $\phi^{-n}(a_i)$, and $\tilde{a}_i$ has zero $p$-adic valuation
if and only if $i \in \{0,m-1\}$.

In both cases, we obtain a residually separable polynomial $\tilde{P}(T)$ over $\calR^{\inte}$ for which
$S= \calR^{\inte}[T]/(\tilde{P}(T))$ satisfies $K_S = L$.
\end{proof}

\begin{defn}
For $L$ a finite extension of $K_0$, let $\bB^\dagger_{\rig,L}$ be the finite extension of $\calR$
obtained by starting with $L(\epsilon)$, producing a finite \'etale extension $S$ of $\calR^{\inte}$ with $K_S = L(\epsilon)$
using Theorem~\ref{T:finite etale},
and base-extending to $\calR$. This ring carries unique extensions of the actions of
$\phi$ and $\Gamma$; it admits a ring isomorphism to $\calR$, but not in a canonical way (and not respecting the 
actions of $\phi$ or $\Gamma$).

Although $\bB^\dagger_{\rig,L}$
does not carry a $p$-adic valuation, the units in this ring are series with bounded coefficients (by
analysis of Newton polygons), and so do have well-defined $p$-adic valuations.
If we then define an \emph{isocrystal} over
$\bB^\dagger_{\rig,L}$ to be a finite free module
equipped with a semilinear action of $\phi$ acting on some (hence any)
basis via an invertible matrix,
it again makes sense to define \emph{degree} and \emph{slope}.
An isocrystal is \emph{\'etale} if it admits a basis via which $\phi$
acts via an invertible matrix over $S$.
\end{defn}

Beware that $L$ does not embed into $\bB^\dagger_{\rig,L}$, as indicated by the following lemma.
\begin{lemma} \label{L:int closed}
For any finite extension $L$ of $K_0$, $K_0$ is integrally closed in 
$\Frac(\bB^\dagger_{\rig,L})$.
\end{lemma}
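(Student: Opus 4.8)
The plan is to push the question into the bounded subring of $\bB^\dagger_{\rig,L}$, inside which there is a genuine discrete valuation to work with. Write $S$ for the finite \'etale $\calR^{\inte}$-algebra with $K_S = L(\epsilon)$ out of which $\bB^\dagger_{\rig,L} = S \otimes_{\calR^{\inte}} \calR$ is built, and note that $K_0$ sits inside $\bB^\dagger_{\rig,L}$ as the constant series. First I would reduce to bounded elements. Let $x \in \Frac(\bB^\dagger_{\rig,L})$ be integral over $K_0$; we may assume $x \neq 0$. Taking a monic polynomial $x^n + c_{n-1}x^{n-1} + \cdots + c_0 = 0$ of minimal degree over $K_0$, minimality forces $c_0 \neq 0$ (else $x$ would satisfy a monic relation of lower degree), so dividing by $c_0 x^n$ shows $x^{-1}$ is also integral over $K_0$. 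Since $\bB^\dagger_{\rig,L}$ is a B\'ezout domain (\cite{kedlaya-annals}, \cite{kedlaya-revisited}), it is integrally closed in its fraction field, so $x$ and $x^{-1}$ both lie in $\bB^\dagger_{\rig,L}$; thus $x$ is a unit. As recorded just above the statement, units of $\bB^\dagger_{\rig,L}$ have bounded coefficients, i.e.\ lie in $S \otimes_{\calR^{\inte}} \calR^{\bd} = S[1/p]$; so it suffices to treat $x \in S[1/p]$.

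Next I would pin down the structure of $S[1/p]$. Because $K_S = L(\epsilon)$ is a field, $S$ has no nontrivial idempotents, hence is a domain; being finite \'etale over the henselian discrete valuation ring $\calR^{\inte}$, it is therefore a henselian discrete valuation ring that is \emph{unramified} over $\calR^{\inte}$. Consequently $pS$ is the maximal ideal of $S$, the quotient $S/pS$ is a finite separable extension field of $\Fp^{\alg}((\overline{\pi}))$ (in particular a reduced ring), and $S$ is exactly the valuation ring of the discretely valued field $S[1/p]$.

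Finally I would run the ramification argument. Let $x \in S[1/p]$ be integral over $K_0$; replacing $x$ by $p^k x$ for suitable $k$ (harmless since $p^k x \in K_0 \iff x \in K_0$), we may assume $x$ lies in the valuation ring $\gotho_{K_1}$ of $K_1 := K_0(x)$. Since the residue field $\Fp^{\alg}$ of $K_0$ is algebraically closed, $K_1/K_0$ is totally ramified of degree $e := [K_1 : K_0]$, so $\gotho_{K_1} = W(\Fp^{\alg})[\varpi]$ for a uniformizer $\varpi$ satisfying an Eisenstein polynomial over $W(\Fp^{\alg})$, whence $\gotho_{K_1}/p\gotho_{K_1} \cong \Fp^{\alg}[\overline{\varpi}]/(\overline{\varpi}^e)$. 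The valuation of $S[1/p]$ restricts on $K_1$ to the unique extension of $v_p$ from $K_0$, so $\gotho_{K_1} = K_1 \cap S$ and $\gotho_{K_1} \cap pS = p\gotho_{K_1}$; reducing modulo $p$ therefore gives an injection $\Fp^{\alg}[\overline{\varpi}]/(\overline{\varpi}^e) \hookrightarrow S/pS$. Since the target is reduced, $\overline{\varpi}$ is not a nonzero nilpotent, forcing $e = 1$ and hence $x \in K_0$.

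The crux is the first step. Away from the bounded subring there is no valuation with which to control $x$, so the argument genuinely needs both that $\bB^\dagger_{\rig,L}$ is integrally closed in its fraction field and that its units are bounded; granting these (from the cited structure theory), the remaining steps are valuation-theoretic bookkeeping whose only real ingredients are that $\Fp^{\alg}$ is algebraically closed and that $S/pS$ is a field.
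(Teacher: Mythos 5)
Your proof is correct, and its second half matches the paper's: once $x$ is known to lie in the bounded subring $S[p^{-1}]$, the extension $K_0(x)/K_0$ is forced to be unramified and hence trivial because the residue field $\Fp^{\alg}$ is algebraically closed. You spell out the unramifiedness concretely (nilpotents in $\gotho_{K_1}/p\gotho_{K_1}$ versus reducedness of $S/pS$) where the paper merely asserts that $f$ generates a finite \emph{unramified} extension, but the idea is the same. Where you genuinely diverge is in the reduction to the bounded subring, which is indeed the crux. The paper specializes $f=r/s$ at the maximal ideals of $\calR$ away from the support of $s$: each image is a root of one fixed polynomial over $K_0$, hence uniformly bounded, and from this it deduces $f \in S[p^{-1}]$ --- a pointwise, removable-singularity/maximum-modulus style argument on the annulus. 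You instead note that $x^{-1}$ is also integral over $K_0$, invoke the B\'ezout property of $\bB^\dagger_{\rig,L}$ to get integral closedness in its fraction field, conclude that $x$ is a unit, and then use the Newton-polygon fact (recorded just before the lemma) that units have bounded coefficients. Your route trades the paper's least obvious step --- passing from ``bounded at every specialization'' to ``lies in $S[p^{-1}]$'' --- for the structural input that $\bB^\dagger_{\rig,L}$ is a B\'ezout domain; both inputs come from the same slope-filtration literature the paper already cites, so this is a legitimate and arguably more algebraic alternative.
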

\begin{proof}
Suppose $r,s \in \bB^\dagger_{\rig,L}$ and $f=r/s$ is integral over $K_0$.
Then for any maximal ideal $\gothm$ of $\calR$ away from the support of $s$, the image of $f$ in
$\bB^\dagger_{\rig,L} \otimes_{\calR} \calR/\gothm$ must be a root of a fixed polynomial
over $K_0$. This implies that all of these images are bounded in norm, so in fact
$f \in S[p^{-1}]$. In particular, $f$ generates a finite \emph{unramified} extension of $K_0$.
Since $K_0$ has algebraically closed residue field, this forces $f \in K_0$.
\end{proof}

\begin{lemma} \label{L:frac Robba}
For any finite extension $L$ of $K_0$ and any open subgroup $U$ of $\Gamma$,
we have $(\Frac({\bB}^\dagger_{\rig,L}))^U = K_0$.
\end{lemma}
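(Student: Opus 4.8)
The plan is the following. The inclusion $K_0 \subseteq (\Frac(\bB^\dagger_{\rig,L}))^U$ is immediate: the defining formula for the $\Gamma$-action on $\calR$ fixes the coefficient field $K_0$ pointwise, and the unique extension to $\bB^\dagger_{\rig,L}$ restricts to this action on $\calR$, hence still fixes $K_0$. For the reverse inclusion I would first reduce to the case $L = K_0$. Given $f \in (\Frac(\bB^\dagger_{\rig,L}))^U$, let $\chi_f(T) \in \Frac(\calR)[T]$ be its characteristic polynomial for the finite separable extension $\Frac(\bB^\dagger_{\rig,L})/\Frac(\calR)$. Since for each $\gamma \in \Gamma$ the extended automorphism of $\bB^\dagger_{\rig,L}$ is compatible with $\gamma$ on $\calR$ (it carries an $\Frac(\calR)$-basis to another such basis and conjugates the multiplication matrix of $f$ into that of $\gamma(f)$), we get $\gamma(\chi_f) = \chi_{\gamma(f)}$; as $f$ is $U$-fixed this shows $\chi_f$ has coefficients in $(\Frac(\calR))^U$. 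Granting the case $L = K_0$, those coefficients lie in $K_0$, so $f$ is integral over $K_0$; since $K_0$ is integrally closed in $\Frac(\bB^\dagger_{\rig,L})$ by Lemma~\ref{L:int closed}, we conclude $f \in K_0$.

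For $L = K_0$ it suffices (the invariants only grow under passing to a subgroup) to show that no $f \in \Frac(\calR) \setminus K_0$ is fixed by a single element $\gamma \in U$; I would choose $\gamma \in U \cap (1 + p\Zp)$ with $\gamma \neq 1$, which exists because $U \cap (1 + p\Zp)$ is open in $\Zp^\times$, hence infinite. Pass to the Robba ring $\calR_{\CC_{K_0}}$ over $\CC_{K_0}$, whose elements are honest rigid analytic functions on an open annulus with outer radius $1$; the $\gamma$-action extends $\CC_{K_0}$-linearly and is realized by the automorphism $\sigma_\gamma\colon \pi \mapsto (1+\pi)^\gamma - 1$, which preserves $|\pi|$ on the annulus because $|(1+\pi)^\gamma - 1| = |\pi|$ for $0 < |\pi| < 1$. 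Thus a $\gamma$-fixed $f \in \Frac(\calR)$, viewed inside $\Frac(\calR_{\CC_{K_0}})$, is a meromorphic function on the annulus invariant under $\sigma_\gamma$, hence under $\sigma_\gamma^{p^k} = \sigma_{\gamma^{p^k}}$ for all $k$.

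Now I would use that $\gamma^{p^k} \to 1$ in $\Zp$. For $n \neq 0$ the fixed points of $\sigma_{\gamma^n}$ are the zeros of $(1+\pi)^{\gamma^n} - 1 - \pi$, a nonzero element of $\calR_{\CC_{K_0}}$ (its linear coefficient is $\gamma^n - 1 \neq 0$), hence a discrete, countable set; since $\CC_{K_0}$ is uncountable I can pick $u_0 = 1 + \pi_0$ in the annulus avoiding these sets for all $n \neq 0$ and also avoiding the (finitely many, in a closed sub-annulus around $u_0$) poles of $f$. Then the points $u_0^{\gamma^{p^k}} := \sigma_{\gamma^{p^k}}(u_0)$ are pairwise distinct and distinct from $u_0$, converge to $u_0$ (by continuity of $\gamma' \mapsto (1+\pi_0)^{\gamma'}$ on $\Zp$), and for $k$ large lie in the locus where $f$ is regular; the identity $f(u^{\gamma^{p^k}}) = f(u)$ then forces $f(u_0^{\gamma^{p^k}}) = f(u_0)$. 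So $f - f(u_0)$ is holomorphic near $u_0$ with infinitely many zeros accumulating at $u_0$, hence vanishes on the whole connected annulus by the identity principle; therefore $f = f(u_0)$ is a constant, which must already lie in $K_0$ (writing $f = g/h$ with $g, h \in \calR$, a scalar value forces the scalar into $K_0$).

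I expect the one genuinely fiddly point to be this last step: confirming that each $\sigma_{\gamma^n}$ has only finitely many fixed points on any closed sub-annulus, that $u_0$ can be chosen to dodge all the relevant bad loci simultaneously, and that the orbit $\{u_0^{\gamma^{p^k}}\}$ really does accumulate at $u_0$ with pairwise distinct terms. None of this is deep, but it is where care is required; everything else — the easy inclusion, the characteristic-polynomial descent, and the appeal to Lemma~\ref{L:int closed} — is routine.
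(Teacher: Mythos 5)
Your proof is correct, and its outer layer---reducing from $L$ to $K_0$ by taking the characteristic polynomial of $f$ for the finite extension $\Frac(\bB^\dagger_{\rig,L})/\Frac(\calR)$, noting its coefficients are $U$-invariant, and then appealing to Lemma~\ref{L:int closed}---is essentially the paper's own second step (the paper phrases it simply as ``$f$ is integral over $\Frac(\calR)^U$''). Where you genuinely diverge is the core computation $\Frac(\calR)^U = K_0$. The paper specializes: for $n$ large it embeds $\Frac(\calR^r)$ into $K_0(\epsilon_n)((t))$ via $\theta_n$ as in Definition~\ref{D:base change}, where the $\Gamma$-action becomes the cyclotomic action on coefficients together with $t \mapsto \gamma t$, and the fixed ring of that explicit action is visibly $K_0$. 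You instead argue dynamically on the annulus over $\CC_{K_0}$: a $\sigma_\gamma$-invariant meromorphic function takes equal values along the orbit $u_0^{\gamma^{p^k}}$, which accumulates at a suitably generic $u_0$, so the identity principle forces constancy. Both work. The paper's route is shorter because $\theta_n$ and diagram \eqref{eq:base change} are already in place and are reused throughout the paper (the same specialization mechanism later produces the local system); yours is self-contained at the level of the annulus and never mentions the $\epsilon_n$. The cost of your route is the bookkeeping you already flagged, though it is lighter than you fear: the fixed locus of $\sigma_{\gamma^n}$ consists exactly of the points $\zeta - 1$ with $\zeta^{p^m} = 1$ for $p^m$ the exact power of $p$ dividing $\gamma^n - 1$, hence is finite for each $n$ and countable overall, and injectivity of restriction from the annulus to a closed sub-annulus is immediate from finiteness of zeros of a nonzero Laurent series there. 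One small oversight to repair: for $p = 2$ the group $1 + 2\ZZ_2$ contains the torsion element $-1$, for which $\gamma^{2^k} = 1$ and your orbit collapses; choose instead $\gamma \in U \cap (1 + p^2\Zp)$ with $\gamma \neq 1$, which has infinite order for every $p$.
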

\begin{proof}
Suppose first that $f \in \Frac(\calR)$ is $\Gamma$-invariant. 
Choose $r>0$ for which $f \in \Frac(\calR^r)$. For some large $n$,
we
can embed $\Frac(\calR^r)$ into $K_0(\epsilon_n)((t))$ as in Definition~\ref{D:base change}.
This action is $\Gamma$-equivariant for the action on $K_0(\epsilon_n)$ via the cyclotomic character
(i.e., with $\gamma(\epsilon_n) = \epsilon_n^{\gamma}$)
and the substitution $t \mapsto \gamma t$. It is evident that the fixed subring of $K_0(\epsilon_n)((t))$
under this action is precisely $K_0$, whence the claim.

In the general case, if $f \in \Frac({\bB}^\dagger_{\rig,L})$ is $U$-invariant, then it is integral
over $\Frac(\calR)^U$ and hence over $\Frac(\calR)^\Gamma = K_0$. By Lemma~\ref{L:int closed},
this forces $f \in K_0$.
\end{proof}

\begin{theorem} \label{T:slope}
Let $L$ be a finite extension of $K_0$.
An isocrystal $M$ over ${\bB}^\dagger_{\rig,L}$ is \'etale if and only if the following conditions
hold.
\begin{enumerate}
\item[(a)]
We have $\deg(M) = 0$.
\item[(b)]
For any nonzero subisocrystal $M'$ of $M$, $\deg(M') \geq 0$.
\end{enumerate}
\end{theorem}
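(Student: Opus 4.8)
The plan is to deduce the statement from the slope filtration theorem for isocrystals over the Robba ring and its finite extensions, established in \cite{kedlaya-annals} and extended to the rings $\bB^\dagger_{\rig,L}$ in \cite{kedlaya-revisited}. I take as input: (i) every isocrystal $M$ over $\bB^\dagger_{\rig,L}$ admits a unique filtration $0 = M_0 \subset M_1 \subset \cdots \subset M_\ell = M$ by subisocrystals whose successive quotients $M_i/M_{i-1}$ are isoclinic (pure) of slopes $s_1 < s_2 < \cdots < s_\ell$; and (ii) an isocrystal over $\bB^\dagger_{\rig,L}$ is \'etale if and only if it is isoclinic of slope $0$. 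I also use that $\deg$ is additive in short exact sequences, that an isoclinic isocrystal of slope $s$ and rank $r$ has degree $sr$, and that a nonzero subisocrystal of an isoclinic isocrystal of slope $s$ has slope $\geq s$ (the direction of inequality being the one characteristic of this slope formalism). An easy induction on rank then shows that for any $M$ with smallest slope $s_1$, every nonzero subisocrystal $M'$ satisfies $\deg(M') \geq s_1 \rank(M')$.

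For the forward implication, suppose $M$ is \'etale. Then $\phi$ acts on some basis via a matrix $A \in \GL_n(S)$, whose determinant is a unit of $S$; since $S$ is integral over $\calR^{\inte}$, such a unit (together with its inverse) has nonnegative $p$-adic valuation, hence valuation $0$, so $\deg(M) = v_p(\det A) = 0$, giving (a). By (ii), $M$ is isoclinic of slope $0$, so $s_1 = 0$, and the induction above gives $\deg(M') \geq 0$ for every nonzero subisocrystal, which is (b). For the converse, assume (a) and (b), and let $s_1 < \cdots < s_\ell$ be the slopes of $M$. Applying (b) to $M_1$ gives $\deg(M_1) \geq 0$, hence $s_1 \geq 0$. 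But $\deg(M) = \sum_{i=1}^\ell s_i \rank(M_i/M_{i-1})$ is a sum of terms each $\geq 0$ (as $s_i \geq s_1 \geq 0$ and the ranks are positive), and it equals $0$ by (a); therefore every $s_i = 0$. Since the $s_i$ are strictly increasing, $\ell = 1$, so $M = M_1$ is isoclinic of slope $0$ and hence \'etale by (ii).

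The entire weight of the argument sits in the inputs (i) and (ii): the existence and uniqueness of the slope filtration over $\bB^\dagger_{\rig,L}$, and the identification of \'etale isocrystals with the isoclinic slope-$0$ ones, are the hard theorems, and once they are granted the rest is the formal Harder--Narasimhan bookkeeping performed above. A reader wishing to avoid the version of the slope filtration theorem over $\bB^\dagger_{\rig,L}$ can instead restrict scalars along the finite extension $\calR \hookrightarrow \bB^\dagger_{\rig,L}$ and work over $\calR$ itself, where \cite{kedlaya-annals} applies directly; this requires keeping track of how degree, \'etale-ness, and the subisocrystal condition behave under restriction of scalars, which is routine but not entirely formal for condition (b).
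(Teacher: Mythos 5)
Your proposal is correct and matches the paper's approach: the paper simply cites the slope theory of \cite{kedlaya-annals} and \cite{kedlaya-revisited} (in the form of \cite[Theorem~1.7.1]{kedlaya-relative}), and your argument is exactly the formal Harder--Narasimhan deduction from those inputs, with the sign conventions (subobjects of a semistable module have slope $\geq$ that of the module, slopes increasing along the filtration) correctly aligned with Kedlaya's. The only substance beyond the cited theorems is the bookkeeping you carry out, so there is nothing to add.
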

\begin{proof}
This is a consequence of slope theory for isocrystals over the Robba ring, as introduced in 
\cite{kedlaya-annals}. See \cite[Theorem~1.7.1]{kedlaya-relative} for a simplified presentation. 
(For less detailed expositions, see also \cite{colmez-bourbaki}
and \cite[Chapter~16]{kedlaya-course}.)
\end{proof}

\begin{defn}
Let $M$ be an isocrystal over ${\bB}^\dagger_{\rig,L}$. 
For $n$ sufficiently large, we may base-extend along $\theta_n$ to produce
a module $M^{(n)}$ over $(K_0(\epsilon_n) \otimes_{\phi^n,K_0} L) ((t))$.
The construction is not canonical (it depends on the choice of a model of $M$ over some $\calR^r$),
but any two such constructions give the same answers for $n$ large. Hence any assertion only concerning the $M^{(n)}$ 
for $n$ large is well-posed.
\end{defn}

The following result of Berger \cite[\S III]{berger-adm} makes the link between weak admissibility and
slopes of isocrystals over the Robba ring.
\begin{theorem}[Berger] \label{T:Berger}
Let $L$ be a finite extension of $K_0$.
Let $(D, \Fil^\cdot D)$ be a filtered isocrystal over $L$, and view
$M = D \otimes_{K_0} {\bB}^\dagger_{\rig,L}$ as an isocrystal over ${\bB}^\dagger_{\rig,L}$.
\begin{enumerate}
\item[(a)]
There exists an isocrystal $M'$ over ${\bB}^\dagger_{\rig,L}$ and a $\phi$-equivariant isomorphism
$M[t^{-1}] \cong M'[t^{-1}]$ of modules over ${\bB}^\dagger_{\rig,L}[t^{-1}]$, via which 
for $n$ sufficiently large, the $t$-adic filtration on $(M')^{(n)}$ coincides with the
filtration on $M^{(n)}$ obtained by tensoring the $t$-adic filtration with the one provided by $D$.
\item[(b)]
The isocrystal $M'$ is \'etale if and only if $(D, \Fil^{\cdot} D)$ is weakly admissible.
\end{enumerate}
\end{theorem}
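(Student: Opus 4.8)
The plan is to handle part (a) as a modification of $M = D \otimes_{K_0} \bB^\dagger_{\rig,L}$ along the divisor $\{t = 0\}$ prescribed by the Hodge filtration, and part (b) as a degree bookkeeping exercise fed into the slope theory of Theorem~\ref{T:slope}.

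\emph{Part (a).} The zero locus of $t = \log(1+\pi)$ inside $\bB^\dagger_{\rig,L}$ consists of the points $\epsilon_n - 1$ together with their $\Gamma$-conjugates, at each of which $t$ vanishes simply; these are permuted transitively by $\phi$ and by $\Gamma$. First I would fix a large $n$ and, via $\theta_n$, replace the local lattice $M^{(n)}$ by its Rees lattice $(M')^{(n)} := \sum_{i \in \ZZ} t^{-i} (\Fil^i D)^{(n)}$ for the Hodge filtration, while keeping $M' = M$ away from $\{t=0\}$; the commutative square \eqref{eq:base change} then forces the analogous prescription at every other zero of $t$ to be simultaneously $\phi$- and $\Gamma$-stable. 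The existence of a unique coherent locally free $\bB^\dagger_{\rig,L}$-submodule $M'$ of $M[t^{-1}]$ realizing these local conditions follows because $\calR$, and hence $\bB^\dagger_{\rig,L}$, is a Bézout domain (Lazard's theorem), so modifications along a divisor are representable. Equivariance of $M'$ under $\phi$ and the asserted coincidence of the two filtrations on $(M')^{(n)}$ and $M^{(n)}$ are then immediate from the construction; the normalization is chosen precisely so that the $t$-adic filtration of the Rees lattice equals the tensor product of the $t$-adic filtration with the filtration from $D$.

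\emph{Part (b).} The heart is the degree formula: for any subisocrystal $D'$ of $D$ carrying the induced filtration, the corresponding modification $M'_{D'}$ of $D' \otimes_{K_0} \bB^\dagger_{\rig,L}$ is a subisocrystal of $M'$ (indeed $M'_{D'} = M' \cap (D' \otimes_{K_0} \bB^\dagger_{\rig,L}[t^{-1}])$ inside $M[t^{-1}]$), and a local computation at $\{t=0\}$ — comparing the determinant of $M^{(n)}$ with that of its Rees lattice and using $\phi(t) = pt$ — gives
\[
\deg(M'_{D'}) = t_N(D') - t_H(D').
\]
Taking $D' = D$ shows that $\deg(M') = 0$ is equivalent to condition (a) of weak admissibility. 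Now apply Theorem~\ref{T:slope}. If $(D, \Fil^\cdot D)$ is weakly admissible, then $\deg(M') = 0$; and were some subisocrystal of $M'$ to have negative degree, the first step $M_1$ of the canonical slope filtration of $M'$ would be a subisocrystal with $\deg(M_1) < 0$ which, being intrinsic, is stable under $\phi$ and under the $\gamma$-semilinear automorphisms $\gamma \in \Gamma$ (which fix $K_0$ and preserve $p$-adic valuations, hence degrees). Using Lemma~\ref{L:frac Robba} together with $\Gamma$-descent along $\bB^\dagger_{\rig,L}[t^{-1}]/K_0$, this $M_1$ descends to a subisocrystal $D_1 \subseteq D$ with $M_1 = M'_{D_1}$, whence $t_N(D_1) - t_H(D_1) = \deg(M_1) < 0$ contradicts weak admissibility; so $M'$ satisfies both conditions of Theorem~\ref{T:slope} and is \'etale. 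Conversely, if $M'$ is \'etale, then $\deg(M') = 0$ gives condition (a), and for every subisocrystal $D' \subseteq D$ we get $\deg(M'_{D'}) \geq 0$ from Theorem~\ref{T:slope}(b), i.e., $t_N(D') \geq t_H(D')$; hence $(D, \Fil^\cdot D)$ is weakly admissible.

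\emph{Main obstacle.} The delicate point is the $\Gamma$-descent used in the forward implication: recovering a subisocrystal $D_1 \subseteq D$ from the $\Gamma$-stable saturated subobject $M_1 \subseteq M'$ and identifying $M_1$ with $M'_{D_1}$. Since $\bB^\dagger_{\rig,L}[t^{-1}]/K_0$ is nowhere near a finite Galois extension, one must exploit the precise structure of the cyclotomic $\Gamma$-action — this is exactly the place where Berger's argument invokes the theory of $(\phi,\Gamma)$-modules and passage to the de Rham period ring. By comparison, the coherence assertion in part (a) is handled by Lazard's theorem, and the degree formula of part (b) is a routine local calculation once the normalizations relating $L$, $K_0$, and the twists $\phi^n$ are pinned down.
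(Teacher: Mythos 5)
Your strategy matches the paper's on both parts: (a) is the geometric construction of $M'$ as a modification of $M$ along the zero locus of $t$ prescribed by the Rees module of the Hodge filtration, with representability of the modification supplied by Lazard's theorem (the B\'ezout property of $\calR$ and hence of $\bB^\dagger_{\rig,L}$); (b) is the degree identity $\deg(M') = t_N(D) - t_H(D)$ (and its analogue for subobjects) fed into Theorem~\ref{T:slope}, where the only delicate direction is descending a destabilizing saturated subisocrystal $N' \subseteq M'$ --- or rather its counterpart $N \subseteq M$ under $M[t^{-1}] \cong M'[t^{-1}]$ --- to a subisocrystal of $D$. You correctly isolate this descent as the crux and correctly name Lemma~\ref{L:frac Robba} as the key input, but you assert the descent rather than prove it, and your closing suggestion that it requires the theory of $(\phi,\Gamma)$-modules and passage to the de Rham period ring overstates what is needed.

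The paper closes this gap with a short self-contained argument that you should supply. Set $D' = D \cap N$ and consider the induced surjection $(D/D') \otimes_{K_0} \bB^\dagger_{\rig,L} \to M/N$; the point is to show it is injective, since then $\dim_{K_0} D' = \rank N$, $N = D' \otimes_{K_0} \bB^\dagger_{\rig,L}$, and $t_N(D') - t_H(D') = \deg(N') < 0$ violates weak admissibility. For injectivity, take a nonzero element $\sum_{i=1}^n d_i \otimes r_i$ of the kernel with $n$ minimal. Since $N$ is stable under $\Gamma$ (by intrinsicness, as you note), for each $j$ and each $\gamma \in \Gamma$ the combination $\gamma(r_j)\sum_i d_i \otimes r_i - r_j \sum_i d_i \otimes \gamma(r_i) = \sum_{i \neq j} d_i \otimes (r_i\gamma(r_j) - r_j\gamma(r_i))$ again lies in the kernel and has fewer terms, so by minimality $r_i\gamma(r_j) = r_j\gamma(r_i)$ for all $i,j,\gamma$. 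Lemma~\ref{L:frac Robba} then gives $r_i/r_j \in K_0^\times$, forcing a $K_0$-linear dependence among the $d_i$ modulo $D'$ and contradicting minimality. No appeal to $\bB_{\mathrm{dR}}$ is needed (Berger's own variant uses the Lie algebra of $\Gamma$ instead of the discrete commutator trick, but is equally elementary). With this step written out, your proof coincides with the paper's.
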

\begin{proof}
For (a), Berger gives an algebraic construction of $M'$ \cite[\S III.1]{berger-adm}.
An alternative geometric approach 
is to construct $M'$ as an object in the category of coherent locally free sheaves
on an open annulus of outer radius 1.
One then uses the fact (essentially due to Lazard)
that on an annulus over a complete discretely valued field, any coherent locally free sheaf is generated by finitely
many global sections \cite[Theorem~3.14]{kedlaya-mono-over}.

For (b), observe that $\deg(M') = t_N(D) - t_H(D)$.
Hence condition (a) of weak admissibility holds if and only if $\deg(M') = 0$.
If condition (b) fails for some $D'$, then $D' \otimes_{K_0} {\bB}^\dagger_{\rig,L}$ is 
a subisocrystal of $M'$ of negative degree, so 
by Theorem~\ref{T:slope}, $M'$ cannot be \'etale. Conversely, if $M'$ fails to be \'etale,
then by Theorem~\ref{T:slope} it has a subisocrystal $N'$ of negative slope, which we may assume to be saturated (otherwise
its saturation has even smaller degree). There is a unique saturated subisocrystal $N$ of $M$ for which
the isomorphism $M[t^{-1}] \cong M'[t^{-1}]$ induces an
isomorphism $N[t^{-1}] \cong N'[t^{-1}]$.

To deduce that $D$ is not weakly admissible, one must check that $N$ arises from a subisocrystal of $D$.
Put $D' = D \cap N$, so that the natural map $D/D' \otimes_{K_0} {\bB}^\dagger_{\rig,L} \to M/N$ is surjective.
We check that this map is also injective. Suppose the contrary,
and choose an element $\sum_{i=1}^n d_i \otimes r_i$ mapping to zero in $M/N$ with $n$ minimal.
Then for each $j \in \{1,\dots,n\}$ and each $\gamma \in \Gamma$, 
$\sum_{i \neq j} d_i \otimes (r_i \gamma(r_j) - r_j \gamma(r_i))$ maps to zero
in $M/N$. By the minimality of $n$, we have $r_i \gamma(r_j) = r_j \gamma(r_i)$ for all $i,j \in \{1,\dots,n\}$
and all $\gamma \in \Gamma$. By Lemma~\ref{L:frac Robba}, $r_i/r_j \in K_0^\times$ for all $i,j \in \{1,\dots,n\}$,
which forces the $d_i$ to be linearly dependent over $K_0$. But then one can rewrite $d_1$ in terms of $d_2,\dots,d_n$
to reduce the value of $n$, a contradiction.

We now conclude that $\dim_{K_0} D' = 
\rank N$, so $N = D' \otimes_{K_0} {\bB}^\dagger_{\rig,L}$. 
Since $t_N(D') - t_H(D') = \deg(N') < 0$, we conclude that
$D$ is not weakly admissible. (See \cite[Corollaire~III.2.5]{berger-adm}
for a similar argument using the Lie algebra of $\Gamma$.)
\end{proof}

To get from \'etale isocrystals over the Robba ring to Galois representations, 
we proceed as in Definition~\ref{D:base change}.
\begin{defn} \label{D:to Galois}
Let $L$ be a finite extension of $K_0$.
Let $M'$ be an \'etale isocrystal over ${\bB}^\dagger_{\rig,L}$.
Choose a basis of $M$ on which $\phi$ acts via an invertible matrix 
over the valuation subring $\gotho$ of ${\bB}^\dagger_{\rig,L}$.
Let $N$ be the $\gotho$-span of this basis.
For each positive integer $n$, 
we obtain
a connected finite \'etale Galois algebra $S_n$ over $\gotho$
for which $N \otimes_{\gotho} S_n/(p^n)$ admits a $\phi$-invariant basis.
For $n$ large, we can base-extend $S_n$ via $\theta_n$ to obtain a 
(not necessarily connected)
finite \'etale Galois algebra over $K_0(\epsilon) \otimes_{K_0} L$
(which itself may not be connected).
From the Galois action on $\phi$-invariant elements of $N \otimes_{\gotho} S_n/(p^n)$,
we obtain a Galois representation $G_{L(\epsilon)} \to \GL_m(\Zp/p^n \Zp)$ for $m= \rank M'$.
(The coefficient ring is $\Zp/p^n \Zp$ because that is the $\phi$-invariant subring of $\gotho/p^n \gotho$.)
Taking the inverse limit, we obtain a Galois representation $G_{L(\epsilon)} \to \GL_m(\Zp)$;
the resulting representation $G_{L(\epsilon)} \to \GL_m(\Qp)$ does not depend on the original
choice of a basis of $M$.

Let $\Gamma$ act on $K_0(\epsilon)$ via the cyclotomic character, and let $\Gamma_L$ be the open subgroup fixing
$L \cap K_0(\epsilon)$.
Via $\theta_n$, we have
\[
\Gamma_L \cong \Gal(K_0(\epsilon)/(L \cap K_0(\epsilon))) = \Gal(L(\epsilon)/L),
\]
while for any finite Galois extension $L'$ of $L$, 
\[
\Aut({\bB}^\dagger_{\rig,L'}/\bB^\dagger_{\rig,L})
\cong \Gal(L'(\epsilon)/L(\epsilon)).
\]
Taking the semidirect product yields an action of $\Gal(L'(\epsilon)/L)$
on ${\bB}^\dagger_{\rig,L'}$ commuting with $\phi$.
Similarly, if $M'$ comes with an action of $\Gamma_L$ commuting with the $\phi$-action,
then combining this 
$\Gamma_L$-action with the action of $\Aut(S/{\bB}^\dagger_{\rig,L})$
provides descent data on the Galois representation previously constructed, yielding
a Galois representation $G_L \to \GL_m(\QQ_p)$.

In the case arising from Theorem~\ref{T:Berger}, we obtain a $\Gamma_L$-action by 
defining such an action on $M$ fixing $D$.
Berger shows \cite{berger-adm} that the resulting Galois representation
is \emph{crystalline} in Fontaine's sense of having a full set of
periods within the crystalline period ring 
$\bB_{\mathrm{crys}}$. Moreover, the passage between the filtered isocrystal and the Galois representation is compatible
with Fontaine's construction of the \emph{mysterious functor}. That is, if one starts with the de Rham cohomology
of a smooth proper scheme $X$ over $\gotho_L$, viewed as a filtered isocrystal (using the Hodge filtration plus the Frobenius
action on crystalline cohomology), the resulting Galois representation may be 
identified with the $p$-adic \'etale cohomology of
$X$ over $L^{\alg}$.
\end{defn}

\section{Admissibility at general analytic points}

Over a finite extension of $K_0$, we have now interpreted weak admissibility of filtered isocrystals in terms of isocrystals over Robba rings, and given a mechanism for passing from such isocrystals to
Galois representations. 
As noted by Hartl \cite{hartl-mixed}, it seems to be difficult to give a direct analogue of Berger's construction
over a general extension of $K_0$.
In the case of Hodge-Tate weights in $\{0,1\}$, 
Hartl has given an analogue of the notion of admissibility, using a
\emph{field of norms} construction in the manner
of Fontaine and Wintenberger \cite{fontaine-wintenberger, wintenberger} and a generalization of the
slope theory for isocrystals over the Robba ring introduced in \cite{kedlaya-revisited}.
We take a different approach that applies to arbitrary weights, by reformulating in terms of the universal filtration over the partial flag variety. This requires working in local coordinates on the flag variety;
the fact that the final construction does not depend on this choice (and so glues)
will follow because the universal crystalline local system is determined by its
fibres over rigid analytic points (see Definition~\ref{D:admissible2}).

\begin{defn}
Let $L$ be a field of characteristic $p$ equipped with a valuation $v_L$.
Let $L'$ be the completion of $L^{\perf}$ for the unique extension of $v_L$.
For $r>0$, let $\tilde{\calR}^{\bd,r}_L$ be the subring of $W(L')[p^{-1}]$ consisting of
$x = \sum_{i=m}^\infty p^i [x_i]$ for which $i + r v_{L}(x_i) \to +\infty$ as 
$i \to +\infty$. On $\tilde{\calR}^{\bd,r}_L$, the function $v_r(x) = \min_i \{i + r v_{L}(x_i)\}$
defines a valuation (that is, $e^{-v_r(\cdot)}$ is a multiplicative norm). 
Put $\tilde{\calR}^{\bd}_L = \cup_{r>0} \tilde{\calR}^{\bd,r}_L$; this is a field which is henselian (but not
complete) for the $p$-adic valuation.

Let $\tilde{\calR}^r_L$ be the Fr\'echet completion of $\tilde{\calR}^{\bd,r}_L$ for 
the $v_s$ for all $s \in (0,r]$. Put $\tilde{\calR}_L = \cup_{r>0} \tilde{\calR}^r_L$;
the Witt vector Frobenius $\phi$ on $W(L')$ extends continuously to $\tilde{\calR}_L$.
One defines isocrystals over $\tilde{\calR}_L$, and the associated notions of degree, slope, and \'etaleness,
by analogy with $\calR$. The analogue of Theorem~\ref{T:slope} carries over; see 
\cite[Corollary~6.4.3]{kedlaya-revisited}. One has an 
analogue of the Dieudonn\'e-Manin classification
theorem: if $L$ is algebraically closed, then
any \'etale isocrystal admits a $\phi$-invariant basis \cite[Theorem~4.5.7]{kedlaya-revisited}.
\end{defn}

We now make a relative analogue of the construction of Theorem~\ref{T:Berger}, working in local coordinates
on the partial flag variety $\calF_{D,H}$.
\begin{defn}
Let $S$ be the completion of $K_0[T_1^{\pm}, \dots, T_d^{\pm}]$ for the Gauss norm, for $d = \dim \calF_{D,H}$.
Let $\gotho_S$ be the subring of $S$ of elements of norm at most 1.
Extend the Witt vector Frobenius $\phi$ from $K_0$ to $S$ continuously so that $\phi(T_i) = T_i^p$
for $i=1,\dots,d$. 
Let $X$ be the open unit disc over $\calM(S)$, i.e., the set of $\alpha \in \calM(S[\pi])$ (for the Gauss norm
on $S[\pi]$) for which $\alpha(\pi) < 1$. We may also identify $X$ with the set of $\alpha \in \calM(\gotho_S \llbracket \pi
\rrbracket[p^{-1}])$ (for the Gauss norm on $\gotho_S \llbracket \pi \rrbracket$) for which $\alpha(\pi) < 1$.

The analogue of the commutative diagram 
\eqref{eq:base change} is
\begin{equation} \label{eq:base change2}
\xymatrix{
\gotho_S \llbracket \pi \rrbracket \ar^{\phi}[r] \ar^{\theta_n}[d] &
\gotho_S \llbracket \pi \rrbracket \ar^{\theta_{n+1}}[d] \\
\gotho_S(\epsilon_n) \llbracket t \rrbracket \ar[r] &
\gotho_S(\epsilon_{n+1}) \llbracket t \rrbracket
}
\end{equation}
in which the bottom horizontal arrow acts on $\gotho_S$ as $\phi$, fixes $\epsilon_n$, and carries $t$ to $pt$.
Define the group $\tilde{\Gamma} \cong \Gamma \ltimes \ZZ_p^d$
of automorphisms of $X$ in which $\Gamma$ acts as usual on $\calR$ and trivially on $S$,
while $(e_1,\dots,e_d) \in \ZZ_p^d$ acts as the $\calR$-linear substitution sending
$T_i$ to $(1 + \pi)^{e_i} T_i$ for $i=1,\dots,d$.

Choose an embedding of $\calM(S)$ into $\calF_{D,H}^{\an}$, as per Lemma~\ref{L:cover by discs}.
Let $\calE$ be the pullback of $D$ along the structural morphism $X \to \calM(K_0)$. 
Then there exists a coherent locally free sheaf $\calE'$ equipped with an isomorphism
$\calE[t^{-1}] \cong \calE'[t^{-1}]$, such that
for each positive integer $n$, the $t$-adic filtration on $\calE' \otimes_{\theta_n} S(\epsilon_n)(( t ))$ 
equals the $t$-adic filtration on $\calE \otimes_{\theta_n} S(\epsilon_n)((t))$
tensored with the filtration provided by the universal filtration over $\calF_{D,H}$.
(Beware that in this construction, $S(\epsilon_n)$ is to be viewed as an $S$-algebra
via $\phi^n$.)
More explicitly, we may obtain $\calE'$ by first modifying $\calE$ along $\pi = 0$, then pulling back by $\phi^n$
to obtain the appropriate modification along $\phi^n(\pi) = 0$.
The local freeness of $\calE'$ can be checked most easily by covering $\calF_{D,H}$ with the variety parametrizing partial flags
with marked basis, on which the verification becomes trivial.

One does not get an action of $\phi$ on $\calE'$ over all of $X$, because of poles introduced at $\pi=0$.
However, one does have an isomorphism $\phi^* \calE' \cong \calE'$ away from $\pi = 0$.
\end{defn}

\begin{defn}
Equip $\overline{S} = \Fp^{\alg}[T_1^{\pm}, \dots, T_d^{\pm}]$ and
$\overline{S}' = \overline{S}\llbracket \overline{z}\rrbracket$
with the trivial norm.
Then consider the diagram
\begin{equation} \label{eq:base change3}
\xymatrix{
\gotho_S \llbracket \pi \rrbracket \ar^{\phi}[r] \ar^{\theta_0}[d] &
\gotho_S \llbracket \pi \rrbracket \ar^{\theta_{1}}[d] \ar^{\phi}[r] & \cdots \\
\gotho_S(\epsilon_0) \ar^{\phi}[r] &
\gotho_S(\epsilon_{1}) \ar^{\phi}[r] & \cdots
}
\end{equation}
obtained from \eqref{eq:base change2}. Taking the completed direct limit over the top row
gives a map $\gotho_S \llbracket \pi \rrbracket \to W(\overline{S}^{\prime, \perf})$ sending $\pi$ to $[\overline{\pi}+1]-1$
and $T_i$ to $[\overline{T}_i]$; this map restricts to a map $\gotho_S \to W(\overline{S}^{\perf})$.
By Lemma~\ref{L:surjective}, the induced maps $\calM(W(\overline{S}^{\perf})) \to \calM(\gotho_S)$
and $\calM(W(\overline{S}^{\prime,\perf})) \to \calM(\gotho_S\llbracket \pi \rrbracket)$
are surjective.
Taking the completed direct limit over the bottom
row gives a map from $\gotho_S$ to the completion of $W(\overline{S}^{\perf})(\epsilon)$; again by
Lemma~\ref{L:surjective},
the induced map $\calM(W(\overline{S}^{\perf})(\epsilon)) \to \calM(\gotho_S)$ is surjective.
In fact, its fibres are permuted transitively by the action of $\tilde{\Gamma}$ on $\gotho_S \llbracket \pi \rrbracket$.
\end{defn}

\begin{defn} \label{D:admissible}
Put $\omega = p^{-p/(p-1)}$.
Given $\tilde{\alpha} \in \calM(W(\overline{S}^{\perf})(\epsilon))$,
let $\beta$ be the image of $\tilde{\alpha}$ under the map
$\theta^*: \calM(W(\overline{S}^{\perf})(\epsilon)) \to \calM(W(\overline{S}^{\prime,\perf}))$
induced by the vertical arrows in \eqref{eq:base change3}.
Note that $\mu(\beta)(\overline{\pi}) = \omega$.

For $L = \calH(\mu(\beta))$, the composition $\gotho_S\llbracket \pi \rrbracket \to 
W(\overline{S}^{\prime,\perf}) \to \tilde{\calR}_L$
extends to series in $\pi$ convergent
in an open annulus with outer radius 1. It thus makes sense to form the base extension
$\calE' \otimes \tilde{\calR}_L$, which is finite free over $\tilde{\calR}_L$ 
\cite[Theorem~2.8.4]{kedlaya-revisited}.
We say that $\alpha \in \calM(S)$ 
is \emph{admissible} if $\calE' \otimes \tilde{\calR}_L$ is \'etale
for some (hence any, thanks to the $\tilde{\Gamma}$-action)
choice of $\tilde{\alpha} \in \calM(W(\overline{S}^{\perf})(\epsilon))$
lifting $\alpha$.
\end{defn}

\begin{theorem} \label{T:admissible open}
The set $\calM(S)^{\adm}$ of admissible points of $\calM(S)$ is an open subset of $\calM(S) \cap \calF^{\wa}_{D,H}$
having the same rigid analytic points.
\end{theorem}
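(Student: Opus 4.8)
The plan is to prove three separate assertions: that $\calM(S)^{\adm} \subseteq \calM(S) \cap \calF^{\wa}_{D,H}$, that $\calM(S)^{\adm}$ is open, and that the inclusion is an equality on rigid analytic points. For the containment in the weakly admissible locus, I would argue contrapositively. If $\alpha \in \calM(S)$ is not weakly admissible, then at $\alpha$ either condition (a) or condition (b) in the definition of weak admissibility fails for the filtered isocrystal $D_{\calH(\alpha)}$. Tracking degrees, the local freeness of $\calE'$ forces $\deg(\calE' \otimes \tilde\calR_L)$ to compute $t_N - t_H$ for the appropriate filtered isocrystal, so a failure of (a) makes the degree nonzero and a failure of (b) produces a subisocrystal of negative degree; by the analogue of Theorem~\ref{T:slope} over $\tilde\calR_L$ (citing \cite[Corollary~6.4.3]{kedlaya-revisited}), $\calE' \otimes \tilde\calR_L$ is then not \'etale, so $\alpha$ is not admissible. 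This is essentially the proof of Theorem~\ref{T:Berger}(b) transported to the Berkovich-theoretic setting; the descent argument showing that a saturated destabilizing subisocrystal of $\calE' \otimes \tilde\calR_L$ arises from a subisocrystal of $D$ needs the appropriate analogue of Lemma~\ref{L:frac Robba}, namely that $(\Frac(\tilde\calR_L))^{\tilde\Gamma} = K_0$, but otherwise the linear-algebra manipulation is verbatim.

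For openness, the key tool is that \'etaleness of an isocrystal over $\tilde\calR_L$ is governed by its Newton polygon (slope multiset), which by slope theory varies upper-semicontinuously. Concretely, I would first descend everything to $\calM(W(\overline S^\perf)(\epsilon))$, where by Lemma~\ref{L:surjective} the relevant spectral map to $\calM(\gotho_S)$ is surjective with fibres permuted transitively by $\tilde\Gamma$; admissibility is then a condition on $\tilde\alpha$ invariant under this group action, hence descends to a well-defined condition on $\alpha$. The isocrystal $\calE' \otimes \tilde\calR_L$ is cut out by a single matrix over a relative Robba-type ring, and the slopes of such a family behave semicontinuously in the Berkovich topology on the base (this is the relative slope theory of \cite{kedlaya-revisited}); the \'etale locus, being where all slopes are $0$ and the degree vanishes, is the complement of the locus where the Newton polygon has a vertex strictly below the line of slope zero, which is closed. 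Pulling this open set back through the surjective, continuous, $\tilde\Gamma$-equivariant spectral map and descending gives an open subset of $\calM(S)$.

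For the rigid analytic points, I would invoke that over a rigid point $\alpha$ of $\calM(S)$ — i.e., one whose residue field $\calH(\alpha)$ is finite over $K_0$ — the construction of $\calE' \otimes \tilde\calR_L$ specializes to exactly the construction of $M'$ over $\bB^\dagger_{\rig,L}$ in Theorem~\ref{T:Berger}, after matching $\tilde\calR_L$ with $\bB^\dagger_{\rig,L}$ via the field-of-norms identification and using that the two notions of \'etaleness agree (both characterized by the slope criterion of Theorem~\ref{T:slope} and its analogue). Thus for rigid $\alpha$, admissibility of $\alpha$ is equivalent to weak admissibility of $D_{\calH(\alpha)}$ by Theorem~\ref{T:Berger}(b), giving the claimed equality of rigid points inside the (already established) inclusion. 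The main obstacle is the openness step: making precise the relative slope semicontinuity for the family $\calE' \otimes \tilde\calR_L$ over the Berkovich base $\calM(S)$, and checking that the passage through the $\tilde\Gamma$-action and the surjections of Lemma~\ref{L:surjective} is compatible with this topology — this is where the hard input from \cite{kedlaya-revisited}, \cite{kedlaya-relative} is genuinely needed, rather than a formal transcription of Berger's argument.
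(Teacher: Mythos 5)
The openness step is where your proposal has a genuine gap. You appeal to an upper semicontinuity of Newton polygons for the family $\calE' \otimes \tilde{\calR}_L$ as $\alpha$ varies over the Berkovich base, citing the relative slope theory of \cite{kedlaya-revisited, kedlaya-relative}. Those references, however, treat a single $\phi$-module over one (possibly very large) coefficient field; they say nothing about how slopes vary in a family over a base, and the paper itself warns in its closing remarks that a good theory of slopes of Frobenius modules in families is lacking and that ``at nonclassical points things are much more mysterious.'' So the semicontinuity you invoke is not available as a black box --- establishing that the \'etale locus of such a family is open is precisely the hard analytic content of this circle of ideas, not a formal consequence of the pointwise theory. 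The paper takes a different route: openness is deferred to the construction of the universal local system. Theorem~\ref{T:construct} produces, around each $\gamma$ with $\calE' \otimes \tilde{\calR}_L$ \'etale, a \emph{uniform} open neighborhood $T$ over which $\calE'$ becomes approximately $\phi$-trivial after a finite \'etale extension of $\calR_S^r(T)$; Definition~\ref{D:open} converts $T$ into an open neighborhood $V$ of $\alpha$ in $\calM(S)$ (using the continuity statements of Theorem~\ref{T:lifting} and the $\tilde{\Gamma}$-action), and every point of $V$ is then automatically admissible (Definition~\ref{D:admissible2}). To make your argument work you would have to either prove a statement of the strength of Theorem~\ref{T:construct} or supply a genuine semicontinuity theorem over Berkovich bases; neither is a transcription of \cite{kedlaya-revisited}.

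The other two assertions are handled essentially as in the paper. For the containment $\calM(S)^{\adm} \subseteq \calM(S) \cap \calF^{\wa}_{D,H}$, note that you only need the easy direction of the argument of Theorem~\ref{T:Berger}(b): a subisocrystal $D'$ of $D$ violating condition (b) of weak admissibility yields a subisocrystal of $\calE' \otimes \tilde{\calR}_L$ of negative degree, contradicting \'etaleness via the analogue of Theorem~\ref{T:slope}. The descent of a saturated destabilizing subobject back to $D$ (and hence the analogue of Lemma~\ref{L:frac Robba}) is needed only for the converse implication, which is not part of this containment. For the rigid analytic points, your ``field-of-norms identification'' is made precise in the paper by exhibiting $\bB^\dagger_{\rig,K}$ as the $(\Zp^d \cap \tilde{\Gamma}_K)$-invariants of the closure $\calS_K$ of the image of $S[\pi^{\pm}]$ in $\tilde{\calR}_L$ and matching the invariant submodule of $\calE \otimes \calS_K$ with Berger's $M$; one then uses that \'etaleness is preserved under the base extension $\bB^\dagger_{\rig,K} \to \tilde{\calR}_L$, rather than any literal isomorphism of the two rings.
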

\begin{proof}
Openness will follow from the construction of the universal crystalline local
system (Theorem~\ref{T:construct} and Theorem~\ref{T:admissible}). 
The proof of Theorem~\ref{T:Berger}
shows that an arbitrary admissible point must also be weakly admissible.

It remains to check that any weakly admissible
rigid analytic point $\alpha \in \calM(S)$ is admissible. Put $K = \calH(\alpha)$; the lifts $\tilde{\alpha}$ of $\alpha$ can be put in bijection
with the components of $K \otimes_{K_0} K_0(\epsilon)$. In particular, for a fixed choice of $\tilde{\alpha}$,
the stabilizer $\tilde{\Gamma}_K$ of $\tilde{\alpha}$ in $\tilde{\Gamma}$ is an open subgroup.
Let $\calS_K$ denote the closure of the image of 
$S[\pi^{\pm}]$ in $\tilde{\calR}_L$.
Recall that $\tilde{\Gamma}$ contains $\Zp^d$ as a normal subgroup;
one calculates (as in Lemma~\ref{L:frac Robba}) that the 
$(\Zp^d \cap \tilde{\Gamma}_K)$-invariants of
$\calS_K$ form a copy of $\bB^\dagger_{\rig,K}$.
By matching up copies of $D$,
we obtain a $(\phi, \Gamma_K)$-equivariant
isomorphism of the $(\Zp^d \cap \tilde{\Gamma}_K)$-invariant submodule of 
$\calE \otimes \calS_K$ with the module $M$ from
Theorem~\ref{T:Berger}.
Since $t$ is invariant under $\Zp^d$,
we also obtain a corresponding isomorphism of primed objects.
The claim then follows.
\end{proof}

\begin{remark}
In the case of Hodge-Tate weights in $\{0,1\}$, Hartl defined the admissible locus $\calF^{\adm}_{D,H}$
(using a different but equivalent method), 
and showed that it is open in $\calF^{\wa}_{D,H}$ \cite[Corollary~5.3]{hartl-mixed}. He also
exhibited some examples where the two spaces differ \cite[Example~5.4]{hartl-mixed}; such examples
have also been exhibited by Genestier and Lafforgue.
Subsequently, Hartl showed (using results of Faltings) that $\calF^{\adm}_{D,H}$
is the image of the Rapoport-Zink period morphism \cite[Theorem~3.5]{hartl-mixed2}.
\end{remark}

\section{The universal crystalline local system}

Having identified a suitable candidate for the admissible locus on $\calF^{\an}_{D,H}$, 
we are ready to construct the universal crystalline local system over it.
(An \emph{\'etale local system}  of a nonarchimedean analytic space can be viewed as a 
representation of the \'etale fundamental group.
See \cite{dejong} for a full development.)

\begin{defn} \label{D:rings}
For $T$ an open subset of $\calM(\overline{S}^{\prime,\perf})$ and $r > 0$,
let $\calR_S^r(T)$ be the 
Fr\'echet completion of $\gotho_S \llbracket \pi \rrbracket[\pi^{-1}, p^{-1}]$ with respect to the restrictions of the seminorms
$\lambda(\gamma^{\log_\omega \rho}) \in \calM(W(\overline{S}^{\prime,\perf}))$ 
for all $\gamma \in T$ and all $\rho$ with $-\log_p \rho \in (0,r]$.
Let $\calR_S(T)$ be the union of the $\calR_S^r(T)$ over all $r>0$.
\end{defn}

\begin{theorem} \label{T:construct}
Suppose $\gamma \in \calM(\overline{S}^{\prime,\perf})$ is such that
for $L = \calH(\gamma)$, $\calE' \otimes \tilde{\calR}_L$ is \'etale.
Then there exist an open neighborhood $T$ of $\gamma$ in $\calM(\overline{S}^{\prime,\perf})$
such that for each positive integer $n$, there exists a finite \'etale
extension of $\calR_S^r(T)$ for some $r>0$ over which $\calE'$ acquires a basis on which $\phi$
acts via a matrix whose difference from the identity has $p$-adic valuation at least $n$.
(Note that $T$ is chosen uniformly in $n$.)
\end{theorem}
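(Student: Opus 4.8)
The goal of Theorem~\ref{T:construct} is a relative analogue, over the coefficient ring $S$ (a local chart on the flag variety), of the standard passage from an étale isocrystal over a Robba ring to a Galois representation: we want to trivialize the $\phi$-action on $\calE'$ modulo $p^n$ after a finite étale cover of $\calR_S^r(T)$, and crucially we want a single neighborhood $T$ that works for all $n$. I will model the argument on Definition~\ref{D:base change} and Theorem~\ref{T:finite etale} (the successive approximation trick), but carried out in a family.

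\medskip

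\emph{Plan.} First, since $\calE' \otimes \tilde{\calR}_L$ is étale at $\gamma$, by the relative Dieudonné–Manin statement (\cite[Theorem~4.5.7]{kedlaya-revisited}, applied over the algebraically closed completed residue field of $\gamma$, or rather over an étale cover detecting the descent datum) there is a $\phi$-invariant basis over an unramified-type extension; the point is that étaleness is an open condition on $\calM(\overline{S}^{\prime,\perf})$ (this follows from the slope theory of \cite{kedlaya-revisited}, the analogue of Theorem~\ref{T:slope}, together with semicontinuity of slopes in families). So I would first choose $T$ small enough and $r>0$ small enough that $\calE'$ is defined and étale over $\calR_S^r(T)$ in the strong sense that $\phi$ acts via a matrix $A$ over the valuation subring $\gotho$ of $\calR_S^r(T)$ whose reduction mod $p$ already admits a $\phi$-invariant basis over a finite étale cover of $\gotho/p$. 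For the latter I would reduce to the residual situation: $\gotho/p$ is (a localization of) a power series ring over $\overline{S}^{\prime}$, and $\phi$ is the relative Frobenius there, so the equation $\phi(v) = v$ for a basis matrix becomes a system of Artin–Schreier-type equations, solvable over a finite étale (in fact Galois with group inside $\GL_m(\Fp)$) cover by the classical theory of étale $\phi$-modules over $\Fp$-algebras. This gives the $n=1$ case and fixes $T$.

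\medskip

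\emph{Inductive step, uniform in $n$.} Having trivialized mod $p$ over some finite étale $\calR_S^r(T)$-algebra, I pass to that algebra (replacing $T$ by its preimage is unnecessary since finite étale covers of $\calR_S^r(T)$ again have the form $\calR_{S'}^{r}(T')$ after adjusting the base) and assume $A \equiv 1 \pmod p$. Now I run the usual successive approximation: given $A = 1 + p^n B$, I want a change of basis $U = 1 + p^n C$ with $U^{-1} A \phi(U) = 1 + p^{n+1} B'$, i.e. $C - \phi(C) \equiv -B \pmod p$. This is again an Artin–Schreier system over $\gotho/p$, solvable over a further finite étale cover (of bounded degree, since the group is again inside $\GL_m(\Fp)$, or really an additive group $M_m(\Fp)$ modulo the image of $1-\phi$). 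The key finiteness point — and the reason $T$ can be chosen once and for all — is that every step requires only a finite étale cover of the \emph{same} ring $\calR_S^r(T)$ with the \emph{same} $r$ and same $T$; only the algebra grows, and the statement of the theorem only asks for \emph{some} $r$ and a finite étale extension at each fixed $n$, not a tower. So for each fixed $n$ I compose $n$ such steps to land at a matrix $\equiv 1 \pmod{p^{n+1}}$ over a finite étale extension, which is what is claimed. I would also note that convergence / the Fréchet topology causes no trouble here because we work modulo a fixed power of $p$ throughout, so the $\pi$-adic (annulus) convergence conditions defining $\calR_S^r(T)$ are preserved by the finitely many algebraic operations involved, and finite étale extensions of $\calR_S^r(T)$ are again Fréchet-complete of the same type (this is where one invokes, implicitly, the étale descent formalism of \cite{kedlaya-revisited} and the fact, as in Definition~\ref{D:base change}, that finite étale algebras over the bounded subring spread out to the full Robba-type ring).

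\medskip

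\emph{Main obstacle.} The routine part is the successive approximation; the real work is the base case, namely establishing that $\calE'$ is étale not just at the single point $\gamma$ but over a whole neighborhood $T$ \emph{and} that this étaleness is witnessed by a $\phi$-matrix congruent to $1$ mod $p$ after a finite étale cover of the integral ring $\gotho$ — i.e. an \emph{integral}, \emph{relative} form of the slope filtration / Dieudonné–Manin theorem with good openness properties. This is exactly the content one must extract from \cite[Theorems~2.8.4, 4.5.7, Corollary~6.4.3]{kedlaya-revisited}: relative slope theory over $\tilde{\calR}$-type rings, descended through $\calM(W(\overline{S}^{\prime,\perf})) \to \calM(\gotho_S\llbracket\pi\rrbracket)$ as in \eqref{eq:base change3}. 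Once that input is in hand, the rest is the mod-$p^n$ bookkeeping sketched above, and the uniformity of $T$ in $n$ is automatic because each approximation step reuses the same ground ring.
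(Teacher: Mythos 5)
Your overall architecture --- trivialize $\phi$ modulo $p$ after a finite \'etale cover, then improve to modulo $p^n$ by successive Artin--Schreier steps, with $T$ uniform in $n$ because each step only enlarges the finite \'etale algebra over the same base --- is exactly the shape of the calculation the paper points to (namely \cite[Proposition~1.7.2]{hartl-equi}, which follows \cite[Lemma~6.1.1]{kedlaya-revisited}); the paper's own proof consists of nothing more than that citation. Your inductive step is set up correctly: with $A = 1+p^nB$ and $U = 1+p^nC$ one needs $\phi(C)-C\equiv -B\pmod{p}$, which modulo $p$ is an entrywise Artin--Schreier system (as $\phi$ reduces to the absolute Frobenius), hence solvable over a finite \'etale cover of the integral subring and liftable by henselianity, exactly as in the pointwise Definition~\ref{D:to Galois}.

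The gap is in your base case. First, you invoke ``semicontinuity of slopes in families'' to conclude that \'etaleness holds on a whole neighborhood $T$; no such statement is available --- the paper explicitly remarks that there is no good slope theory for Frobenius modules in families away from rigid analytic points, and openness of the \'etale/admissible locus is a \emph{consequence} of Theorem~\ref{T:construct} (see the proof of Theorem~\ref{T:admissible open}), not an input to it. Second, the results you lean on for the ``integral, relative form of Dieudonn\'e--Manin'' (\cite[Theorems~2.8.4 and 4.5.7, Corollary~6.4.3]{kedlaya-revisited}) are purely pointwise statements over a single $\tilde{\calR}_L$; they produce neither a matrix over $\calR_S^r(T)$ nor any neighborhood $T$. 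What actually has to be done --- and is the entire content of the cited Lemma~6.1.1 and Proposition~1.7.2 --- is quantitative: use Dieudonn\'e--Manin at $\gamma$ to obtain a $\phi$-invariant basis over $\tilde{\calR}_{L'}$ for a suitable extension $L'$ of $L$, approximate it by a basis defined over (a finite \'etale extension of) $\gotho_S\llbracket\pi\rrbracket[\pi^{-1},p^{-1}]$, and then estimate the resulting $\phi$-matrix in \emph{all} of the seminorms $\lambda((\gamma')^{\log_\omega\rho})$ for $\gamma'\in T$ and $-\log_p\rho\in(0,r]$, showing it is uniformly close to the identity once $T$ is shrunk appropriately. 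That norm estimate is where $T$ is actually chosen, and it is the step your sketch replaces by citations that do not cover it.
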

\begin{proof}
This is a calculation following \cite[Proposition~1.7.2]{hartl-equi}, which in turn follows
\cite[Lemma~6.1.1]{kedlaya-revisited}.
\end{proof}

\begin{defn} \label{D:open}
Suppose $\alpha \in \calM(S)$ is admissible.
Define $\tilde{\alpha}, \beta$ as in Definition~\ref{D:admissible},
and apply Theorem~\ref{T:construct} with $\gamma = \mu(\beta)$.
Then 
$\mu^{-1}(T)$ is open in $\calM(W(\overline{S}^{\prime,\perf}))$ because $\mu$ is continuous
(Theorem~\ref{T:lifting}), so $U = (\theta^*)^{-1}(\mu^{-1}(T))$ is open in
$\calM(W(\overline{S}^{\perf})(\epsilon))$.
Let $U'$ be the union of the $\tilde{\Gamma}$-translates of $U$, which is again open
in $\calM(W(\overline{S}^{\perf})(\epsilon))$.
Then $U$ and $U'$ have the same image $V_0$ in $\calM(\gotho_S)$, but $U'$ is the full inverse image of $V_0$ in
$\calM(W(\overline{S}^{\perf})(\epsilon))$. Hence the complement of $V_0$ is the image of a closed and thus
compact set (namely the complement of $U'$), and so is compact and thus closed. We conclude that $V_0$ is open
in $\calM(\gotho_S)$, and $V = V_0 \cap \calM(S)$ is open in $\calM(S)$. Since $\beta \in \mu^{-1}(T)$, we also have
$\alpha \in V$.
\end{defn}

\begin{theorem} \label{T:admissible}
Suppose $\alpha \in \calM(S)$ is admissible. Define an open neighborhood $V$ of $\alpha$ in $\calM(S)$
as in Definition~\ref{D:open}. Then there exists a $\Qp$-local system over $V$ 
whose specialization to 
any rigid analytic point of $V$ may be identified with the crystalline Galois representation produced by
Definition~\ref{D:to Galois}.
\end{theorem}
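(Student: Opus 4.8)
The plan is to reverse-engineer a local system on $V$ from the finite \'etale covers produced by Theorem~\ref{T:construct}, then pin it down on rigid analytic points using Definition~\ref{D:to Galois}. First I would apply Theorem~\ref{T:construct} with $\gamma = \mu(\beta)$ to obtain, for each positive integer $n$, an open $T \ni \gamma$ in $\calM(\overline{S}^{\prime,\perf})$ (uniform in $n$) and a finite \'etale extension of $\calR_S^r(T)$ for some $r>0$ over which $\calE'$ acquires a basis on which $\phi$ acts via a matrix congruent to the identity modulo $p^n$. The $\phi$-invariant vectors in $\calE' \otimes (\text{this cover})/p^n$ then span a free $\ZZ/p^n\ZZ$-module of rank $m = \rank \calE'$ carrying an action of the Galois group of the cover; this is the standard mechanism (as in Definition~\ref{D:to Galois} and in \cite{kedlaya-revisited}) producing, by taking the inverse limit over $n$, a representation of the relevant \'etale fundamental group, i.e.\ a $\Zp$-local system over an appropriate open subset. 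Inverting $p$ gives the $\Qp$-local system.

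The next step is to descend this local system, which a priori lives over an open in $\calM(W(\overline{S}^{\perf})(\epsilon))$ (or over $V_0 \subset \calM(\gotho_S)$), down to $V = V_0 \cap \calM(S)$. Here I would invoke the $\tilde{\Gamma}$-action: by construction in Definition~\ref{D:open}, $U'$ is the full $\tilde{\Gamma}$-saturation of $U$ and maps onto $V_0$, with $\tilde{\Gamma} \cong \Gamma \ltimes \ZZ_p^d$ permuting the fibres. One equips $\calE'$ with a $\tilde{\Gamma}$-action (extending the $\Gamma$-action on $\calR$ trivially on $S$ and letting $\ZZ_p^d$ act by the substitutions $T_i \mapsto (1+\pi)^{e_i} T_i$, fixing $D$), which commutes with $\phi$ and hence acts on the $\phi$-invariant lattices; combining this with the Galois action on the covers furnishes descent data, exactly as in the passage from $G_{L(\epsilon)}$-representations to $G_L$-representations in Definition~\ref{D:to Galois}. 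Effectivity of this descent uses that $\tilde{\Gamma}$ acts transitively on fibres (the last sentence before Definition~\ref{D:admissible}) together with the faithfully flat / Galois-descent formalism for \'etale local systems on Berkovich spaces from \cite{dejong}. The outcome is a $\Qp$-local system over $V$.

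Finally I would check the specialization claim. Let $\alpha' \in V$ be a rigid analytic point and set $K = \calH(\alpha')$. Pulling back the construction along $\alpha'$, the cover, the $\phi$-invariant lattices, and the $\tilde{\Gamma}$-descent all specialize; the argument of Theorem~\ref{T:admissible open} identifies the $(\ZZ_p^d \cap \tilde{\Gamma}_K)$-invariants of $\calE' \otimes \calS_K$ with the \'etale isocrystal $M'$ over $\bB^\dagger_{\rig,K}$ of Theorem~\ref{T:Berger}, $(\phi,\Gamma_K)$-equivariantly. Therefore the fibre at $\alpha'$ of our local system is computed by precisely the recipe of Definition~\ref{D:to Galois} applied to $M'$, which by construction is the crystalline Galois representation attached to the filtered isocrystal at $\alpha'$. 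I expect the main obstacle to be the descent step: one must verify that the finite \'etale covers of $\calR_S^r(T)$ (and the resulting local systems) are compatible under the $\tilde{\Gamma}$-action in a way that genuinely glues over the non-saturated open $V$ rather than merely over its $\tilde{\Gamma}$-saturation, and that the Berkovich \'etale topology is fine enough for this descent to be effective — this is where the subtlety flagged in the introduction (the failure over the naive rigid-analytic Grothendieck topology) is confronted, and where the input from \cite{dejong} and the uniformity in $n$ of Theorem~\ref{T:construct} are essential. The other steps are, by contrast, fairly mechanical adaptations of the rigid-analytic-point case treated in Section~4.
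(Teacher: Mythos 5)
Your proposal follows the same architecture as the paper's proof: finite \'etale covers from Theorem~\ref{T:construct}, $\phi$-invariant lattices modulo $p^n$, an inverse limit, descent along $\tilde{\Gamma}$, and identification at rigid points via the proof of Theorem~\ref{T:admissible open}. But there is one concrete step you leave unaddressed, and it is the only step in the paper's proof that is not mechanical: you never say \emph{which} open subset your local system lives over before descent, beyond ``an appropriate open subset.'' The ring $\calR^r_S(T)$ of Definition~\ref{D:rings} is completed with respect to seminorms $\lambda(\gamma^{\log_\omega \rho})$ indexed by points $\gamma$ of $T \subseteq \calM(\overline{S}^{\prime,\perf})$, i.e., by data living on the characteristic-$p$ side; a priori this says nothing about whether the resulting local system is defined near the points of $V$, which sit inside $X$ as the locus $\pi = \epsilon_n - 1$. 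The paper bridges this by computing, for $\alpha' \in V$ with associated $\beta'$ and $\gamma' = \mu(\beta')$, that $\calR^r_S(T)$ carries the seminorm $\lambda((\gamma')^{p^{-n}}) = (\lambda \circ \mu)((\phi^{-n})^*(\beta'))$, which dominates $(\phi^{-n})^*(\beta')$ by the inequality $(\lambda \circ \mu)(\beta) \geq \beta$ of Theorem~\ref{T:lifting}. This is what shows that elements of $\calR^r_S(T)$ are analytic on a subspace of $X$ containing $(\phi^{n*})^{-1}(V)$, hence that the local system is defined over $(\phi^{n*})^{-1}(V)$; only then does $\tilde{\Gamma}$-descent bring it down to $V$. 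This step is the raison d'\^{e}tre of Theorem~\ref{T:lifting}, and omitting it leaves the construction floating over an unidentified space on the wrong side of the tilting correspondence.

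By contrast, the obstacle you flag---effectivity of descent in the Berkovich \'etale topology---is not where the difficulty lies. The descent here is ordinary (pro)finite Galois descent along the tower obtained by adjoining $\epsilon_n$ and roots of the $T_i$, exactly parallel to the passage from $G_{L(\epsilon)}$- to $G_L$-representations in Definition~\ref{D:to Galois}, and the paper dispatches it in one sentence. The subtlety about the Grothendieck topology mentioned in the introduction is already absorbed into the definition of the admissible locus as a Berkovich-open set, not into this descent step.
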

\begin{proof}
Retain notation as in Theorem~\ref{T:construct} and Definition~\ref{D:open}, 
and choose a nonnegative integer $n$ with $r > 1/(p^{n-1}(p-1))$.
Let $\alpha' \in V$ be any point, choose $\tilde{\alpha}' \in U$ lifting $\alpha'$,
and put $\beta' = \theta^*(\alpha') \in \mu^{-1}(T)$
and $\gamma' = \mu(\beta') \in T$.
Then $\calR^r_S(T)$ admits the seminorm
\[
\lambda((\gamma')^{p^{-n}})
= \lambda(\mu(\beta')^{p^{-n}}) = (\lambda \circ \mu)((\phi^{-n})^*(\beta')),
\]
which dominates $(\phi^{-n})^*(\beta')$ by Theorem~\ref{T:lifting}. We conclude that the elements of 
$\calR^r_S(T)$ define analytic functions on a subspace of $X$ containing $(\phi^{n*})^{-1}(V)$,
under the identification of $\calM(S)$ with the subspace $\pi=\epsilon_n-1$ of $X$.
As in Definition~\ref{D:to Galois},
by considering $\phi$-invariant sections of $\calE'$ over finite \'etale extensions of $\calR^r_S(T)$,
we obtain a $\Qp$-local system over $(\phi^{n*})^{-1}(V)$.
By also keeping track of the action of $\tilde{\Gamma}$, we obtain descent data yielding a $\Qp$-local system
over $V$ itself. The compatibility at rigid analytic points follows from the proof of
Theorem~\ref{T:admissible open}.
\end{proof}

\begin{defn}  \label{D:admissible2}
In Theorem~\ref{T:admissible}, any point of $V$ is automatically admissible. 
It follows that there exists an open
subset $\calF^{\adm}_{D,H}$ of $\calF^{\an}_{D,H}$
such that 
$\calM(S)^{\adm} = \calM(S) \cap \calF^{\adm}_{D,H}$ for any embedding
of $\calM(S)$ into $\calF^{\an}_{D,H}$ as in Lemma~\ref{L:cover by discs}.
We call $\calF^{\adm}_{D,H}$ the \emph{admissible locus} of $\calF^{\an}_{D,H}$.

By an argument similar to the proof of Theorem~\ref{T:admissible}, one shows that the definition
of the admissible locus, and the construction of the local system, does not depend on the
choice of local coordinates. We may thus glue using a cover as in Lemma~\ref{L:cover by discs}
to produce a
$\Qp$-local system over $\calF^{\adm}_{D,H}$ specializing to the crystalline representations associated
to rigid analytic points. We call this the \emph{universal crystalline local system} on $\calF^{\adm}_{D,H}$.
\end{defn}

\section{Further remarks}

\begin{remark} \label{R:Kisin}
In some cases of Hodge-Tate weights equal to $\{0,1\}$, the space $\calF^{\adm}_{D,H}$ receives a period morphism
constructed by Rapoport-Zink \cite{rapoport-zink}
from the generic fibre of the universal deformation space associated to a suitable $p$-divisible group.
One expects (as in \cite[Remark~7.8]{hartl-mixed})
that the pullback of the universal crystalline local system is obtained by extension of scalars from
a $\Zp$-local system which computes the (integral) crystalline Dieudonn\'e module at each rigid analytic point.
This appears to follow from work of Faltings (manuscript in preparation).

It should be possible
to give an alternative proof, more in the spirit of this lecture,
using Kisin's variant of Berger's proof of the Colmez-Fontaine theorem \cite{kisin}.
In Kisin's approach, the role of the highly ramified Galois tower $K_0(\epsilon)$ is played by the
non-Galois Kummer tower $\cup_n K_0(p^{-p^n})$. Instead of $(\phi, \Gamma)$-modules, one ends up
with modules over $\gotho_{K_0} \llbracket u \rrbracket$ carrying an action of the Frobenius lift
$u \mapsto u^p$, with kernel killed by a power of a certain polynomial. These are particularly well
suited for studying \emph{integral} properties of crystalline representations; indeed, their definition is inspired
by a construction of Breuil \cite{breuil} introduced precisely to study such integral aspects (moduli of
finite flat group schemes and $p$-divisible groups). We expect that one can carry out a close analogue
of the construction described in this lecture using Kisin's modules.
\end{remark}

\begin{remark}
When one considers the cohomologies of smooth proper schemes over a $p$-adic field which are no longer
required to have good reduction, one must broaden the class of allowed Galois representations slightly.
The correct class is Fontaine's class of \emph{de Rham} representations, which coincides with the class
of \emph{potentially semistable} representations by a theorem of Berger \cite{berger-cst}.
On the side of de Rham cohomology, one must replace the category of isocrystals by the category of
$(\phi, N)$-modules with finite descent data.
That is, one specifies not only a Frobenius action but also a linear endomorphism $N$ 
satisfying $N\phi = p\phi N$. 
(Note that any such $N$ is necessarily nilpotent.) It should be possible to follow the model of this lecture
to construct a universal \emph{semistable} local system; 
one possible point of concern is that the parameter space replacing
the partial flag variety is no longer proper.
\end{remark}

\begin{remark}
The techniques of this paper fit into the general philosophy that one can study representations of
arithmetic fundamental groups of schemes of finite type over a $p$-adic field using $p$-adic analytic methods.
This attitude has been convincingly articulated by Faltings in several guises, such as his $p$-adic analogue of
the Simpson correspondence \cite{faltings-simpson}. It has subsequently been taken up by Andreatta and his
collaborators (Brinon, Iovi\c{t}a), who have made great strides in developing and applying a relative
theory of $(\phi, \Gamma)$-modules \cite{andreatta-brinon, andreatta-iovita}.
\end{remark}

\begin{remark}
The relative $p$-adic Hodge theory in this paper has been restricted to the case where one starts with
a fixed isocrystal and varies its filtration, corresponding geometrically to a deformation to characteristic $0$
of a fixed scheme in characteristic $p$. It is also of great interest to consider cases where the 
isocrystal itself may vary.

On the Galois side, families of Galois representations parametrized by a rigid analytic space occur quite frequently in the theory of $p$-adic modular forms, dating back to the work of Hida \cite{hida} on ordinary families,
and continuing in the work of Coleman and Mazur \cite{coleman-mazur} on the eigencurve. More recently, 
the study of families of representations has become central in the understanding of the $p$-adic local
Langlands correspondence, particularly for the group $\GL_2(\Qp)$ \cite{colmez-reps}.

A partial analogue of the $(\phi, \Gamma)$-module functor for representations in analytic families
has been introduced by Berger and Colmez \cite{berger-colmez}. Unfortunately, it is less than clear what the essential image of the functor is; see \cite{kedlaya-liu} for some discussion. Moreover, proper understanding of families
of $(\phi, \Gamma)$-modules is seriously hampered by the lack of a good theory of slopes of Frobenius modules in 
families; the situation at rigid analytic points is understood thanks to \cite{kedlaya-relative}, but 
at nonclassical points things are much more mysterious.
Nonetheless, the construction has proved useful in
the study of Selmer groups in families, as in the work of Bella\"\i che \cite{bellaiche} and Pottharst
\cite{pottharst, pottharst2}.

One can also make analogous considerations on the side of Breuil-Kisin modules, as in the work of
Pappas and Rapoport \cite{pappas-rapoport}. Again, the correspondence from modules back to Galois representations is
somewhat less transparent in families, so the moduli stack of Breuil-Kisin modules itself becomes the central object of study; on this stack, Pappas and Rapoport introduce an analogue of the Rapoport-Zink period morphism.
One can also introduce an analogue of the admissible locus, as in work of Hellmann (in preparation);
in this line of inquiry, there appears to be some advantage in replacing 
Berkovich's theory of nonarchimedean analytic spaces with Huber's
more flexible theory of \emph{adic spaces} \cite{huber}.
\end{remark}


\begin{thebibliography}{99}

\bibitem{andreatta-brinon}
F. Andreatta and O. Brinon,
\textit{Surconvergence des repr\'esentations $p$-adiques: le cas relatif},
Ast\'erisque \textbf{319} (2008), 39--116. 

\bibitem{andreatta-iovita}
F. Andreatta and A. Iovi\c{t}a,
\textit{Global applications of relative $(\phi, \Gamma)$-modules, I},
Ast\'erisque \textbf{319} (2008), 39--420.

\bibitem{baker-rumely}
M. Baker and R. Rumely, \textit{Potential theory and dynamics on the Berkovich projective line},
Surveys and Monographs 159, Amer. Math. Soc., Providence, 2010.

\bibitem{bellaiche}
J. Bella\"\i che, \textit{Ranks of Selmer groups in an analytic family},
arXiv:0906.1275v1 (2009).

\bibitem{berger-cst}
L. Berger,
\textit{Repr\'esentations $p$-adiques et \'equations diff\'erentielles},
Invent. Math. \textbf{148} (2002), 219--284.

\bibitem{berger-adm}
L. Berger,
\textit{\'Equations diff\'erentielles $p$-adiques et $(\phi, N)$-modules filtr\'es}, 
Ast\'erisque \textbf{319} (2008), 13--38.

\bibitem{berger-colmez}
L. Berger and P. Colmez, 
\textit{Familles de repr\'esentations de de Rham et monodromie $p$-adique},
Ast\'erisque \textbf{319} (2008), 303--337.

\bibitem{berkovich1}
V. Berkovich, 
\textit{Spectral theory and analytic geometry over non-archimedean fields},
Surveys and Monographs 33, Amer. Math. Soc., Providence, 1990. 

\bibitem{breuil}
C. Breuil, 
\textit{Groupes $p$-divisibles, groupes finis et modules filtr\'es},
Ann. of Math. \textbf{152} (2000), 489--549.

\bibitem{coleman-mazur}
R. Coleman and B. Mazur, 
\textit{The eigencurve}, in
\textit{Galois representations in arithmetic algebraic geometry (Durham, 1996)},
London Math. Soc. Lecture Note Series 254, Cambridge Univ. Press, Cambridge, 1998,
1--113.

\bibitem{colmez-bourbaki}
P. Colmez, 
\textit{Les conjectures de monodromie $p$-adiques},
in \textit{S\'eminaire Bourbaki 2001/2002},
Ast\'erisque \textbf{290} (2003), 53--101.

\bibitem{colmez-reps}
P. Colmez,
\textit{Repr\'esentations de $\GL_2(\Qp)$ et $(\phi, \Gamma)$-modules},
Ast\'erisque \textbf{330} (2010), 283--511.

\bibitem{colmez-fontaine}
P. Colmez and J.-M. Fontaine,
\textit{Construction des repr\'esentations $p$-adiques semi-stables},
Invent. Math. \textbf{140} (2000), 1--43. 

\bibitem{dejong}
A.J. de Jong, 
\textit{\'Etale fundamental groups of non-Archimedean analytic spaces}, 
Compos. Math. \textbf{97} (1995), 89--118.

\bibitem{drinfeld}
V.G. Drinfel'd, 
\textit{Coverings of $p$-adic symmetric regions}, 
Funct. Anal. Appl.
\textbf{10} (1976), 29--40.

\bibitem{faltings85}
G. Faltings,
\textit{Mumford-Stabilit\"at in der algebraischen Geometrie},
in \textit{Proceedings of the Intl. Congress of Mathematicians, Vol. 1, 2 (Z\"urich, 1994)}, 
Birkh\"auser, Basel, 1995, 648--655. 

\bibitem{faltings-simpson}
G. Faltings, 
\textit{A $p$-adic Simpson correspondence},
Adv. Math. \textbf{198}  (2005), 847--862.

\bibitem{faltings}
G. Faltings, 
\textit{Coverings of $p$-adic period domains}, 
preprint (2007) available at
\texttt{http://www.mpim-bonn.mpg.de/preprints/}.

\bibitem{fontaine-festschrift}
J.-M. Fontaine,
\textit{Repr\'esentations $p$-adiques des corps locaux, I},
in \textit{The Grothendieck Festschrift, Vol. II}, 
Progr. Math. 87, Birkh\"auser, Boston,
1990, 249--309.

\bibitem{fontaine-wintenberger}
J.-M. Fontaine and J.-P. Wintenberger, 
\textit{Le ``corps des normes'' de certaines extensions alg\'ebriques de corps locaux},
C.R. Acad. Sci. Paris S\'er. A-B \textbf{288} (1979), A367--A370.

\bibitem{groth-icm}
A. Grothendieck, 
\textit{Groupes de Barsotti-Tate et cristaux}, in 
\textit{Actes du Congr\`es 
International des Math\'ematiciens (Nice, 1970), Tome 1}, Gauthier-Villars, 1971, 431--436.

\bibitem{hartl-equi}
U. Hartl, 
\textit{Period spaces for Hodge structures in equal characteristic},
arXiv:math.NT/0511686v2 (2006).

\bibitem{hartl-mixed}
U. Hartl, 
\textit{On a conjecture of Rapoport and Zink},
arXiv:math.NT/0605254v1 (2006).

\bibitem{hartl-mixed2}
U. Hartl, 
\textit{On period spaces for $p$-divisible groups},
arXiv:0709.3444v3 (2008).

\bibitem{hida}
H. Hida,
\textit{Galois representations into $\GL_2(\Zp\llbracket X\rrbracket)$ attached to ordinary cusp forms},
Invent. Math. \textbf{85} (1986), 545--613. 

\bibitem{huber}
R. Huber,
\textit{\'Etale cohomology of rigid analytic varieties and adic spaces},
Aspects of Mathematics, E30,
Friedr. Vieweg \& Sohn, Braunschweig, 1996.

\bibitem{kedlaya-annals}
K.S. Kedlaya,
\textit{A $p$-adic local monodromy theorem}, 
Ann. of Math. \textbf{160} (2004), 93--184.

\bibitem{kedlaya-mono-over}
K.S. Kedlaya, 
\textit{Local monodromy of $p$-adic differential equations: an overview},
Int. J. Num. Theory \textbf{1} (2005), 109--154.

\bibitem{kedlaya-revisited}
K.S. Kedlaya, 
\textit{Slope filtrations revisited}, 
Doc. Math. \textbf{10} (2005), 447--525; 
errata, ibid. \textbf{12} (2007), 361--362.

\bibitem{kedlaya-relative}
K.S. Kedlaya, 
\textit{Slope filtrations for relative Frobenius},
Ast\'erisque \textbf{319} (2008), 259--301.

\bibitem{kedlaya-semi4}
K.S. Kedlaya, 
\textit{Semistable reduction for overconvergent $F$-isocrystals, IV:
Local semistable reduction at nonmonomial valuations}, 
arXiv:0712.3400v3 (2009).

\bibitem{kedlaya-course}
K.S. Kedlaya, 
\textit{$p$-adic differential equations}, 
Cambridge Univ. Press, Cambridge, 2010.

\bibitem{kedlaya-paris}
K.S. Kedlaya, 
\textit{Slope filtrations and $(\phi, \Gamma)$-modules in families}, 
lecture notes (2010) available
at \texttt{http://math.mit.edu/\~{}kedlaya/papers/}.

\bibitem{kedlaya-wittgeometry}
K.S. Kedlaya,
\textit{Nonarchimedean geometry of Witt vectors},
arXiv:1004.0466v1 (2010).

\bibitem{kedlaya-liu}
K.S. Kedlaya and R. Liu, 
\textit{On families of $(\phi, \Gamma)$-modules}, 
Alg. and Num. Theory, to appear;
arXiv:0812.0112v2 (2009).

\bibitem{kisin}
M. Kisin, 
\textit{Crystalline representations and $F$-crystals},
in \textit{Algebraic geometry and number theory}, 
Progr. Math. 253, Birkh\"auser, Boston, 2006, 459--496.

\bibitem{messing}
W. Messing, \textit{The crystals associated to Barsotti-Tate groups}, 
Lecture Notes in Math. 264, Springer-Verlag, Berlin, 1972.

\bibitem{pappas-rapoport}
G. Pappas and M. Rapoport, 
\textit{$\Phi$-modules and coefficient spaces}, 
Moscow Math. J. \textbf{9} (2009), 625--663.

\bibitem{pottharst}
J. Pottharst, 
\textit{Triangulordinary Selmer groups}, 
arXiv:0805.2572v1 (2008).

\bibitem{pottharst2}
J. Pottharst, 
\textit{Analytic families of finite-slope Selmer groups}, preprint (2010) available at
\texttt{http://www2.bc.edu/\~{}potthars/writings/}.

\bibitem{rapoport-zink}
M. Rapoport and T. Zink, 
\textit{Period spaces for $p$-divisible groups}, 
Ann. Math. Studies 141,
Princeton Univ. Press, Princeton, 1996.

\bibitem{totaro}
B. Totaro,
\textit{Tensor products in $p$-adic Hodge theory},
Duke Math. J. \textbf{83} (1996), no. 1, 79--104. 

\bibitem{wintenberger}
J.-P. Wintenberger, 
\textit{Le corps des normes de certaines extensions infinies des corps locaux; applications},
Ann. Sci. \'Ec. Norm. Sup. \textbf{16} (1983), 59--89.

\end{thebibliography}
\end{document}